\newcommand{\RR}{{{\mathbb R}}}
\newcommand{\R} {\mathbb R}
\newcommand{\cuad}{{\sqcap\kern-.68em\sqcup}}
\newcommand{\ve}{\varepsilon}
\newcommand{\be}{\begin{equation}}
\newcommand{\ee}{\end{equation}}
\newcommand{\la}{\lambda}
\definecolor{darkgreen}{rgb}{0.2,0.7,0.1}
\newcommand{\sech}{\mathop{\mbox{\normalfont sech}}\nolimits}
\newcommand{\al}{\alpha}
\def\bm{\left( \begin{array}{cc}}
\def\endm{\end{array}\right)}
\newcommand{\ba}{\begin{equation*}}
\newcommand{\ea}{\begin{equation*}}
\newcommand{\bea}{\begin{eqnarray}}
\newcommand{\eea}{\end{eqnarray}}
\newcommand{\bee}{\begin{eqnarray*}}
\newcommand{\eee}{\end{eqnarray*}}
\newcommand{\ben}{\begin{enumerate}}
\newcommand{\een}{\end{enumerate}}
\numberwithin{equation}{section}
\newtheorem{theorem}{Theorem}[section]
\newtheorem*{theorem*}{Theorem}
\newtheorem{proposition}{Proposition}[section]
\newtheorem{corollary}{Corollary}[section]
\newtheorem{lemma}{Lemma}[section]
\newtheorem{claim}{Claim}[section]
\theoremstyle{remark}
\newtheorem{remark}{Remark}[section]
\title[Kink dynamics]{Kink dynamics in the $\phi^4$ model: asymptotic stability for odd perturbations in the energy space}
\author{Micha{\l}  Kowalczyk}
\address{Departamento de Ingenier\'{\i}a Matem\'atica and Centro
de Modelamiento Matem\'atico (UMI 2807 CNRS), Universidad de Chile, Casilla
170 Correo 3, Santiago, Chile.}
\email {kowalczy@dim.uchile.cl}
\thanks{M. Kowalczyk was partially supported by Chilean research grants Fondecyt 1130126, Fondo Basal CMM-Chile and {ERC 291214 BLOWDISOL}. The author would like to thank Centre de math\'ematiques Laurent Schwartz at the  Ecole Polytechnique and the Universit\'e Cergy-Pointoise where part of this work was done.}
\author{Yvan Martel}
\address{Centre de math\'ematiques Laurent Schwartz (UMR 7640 CNRS), Ecole polytechnique, 91128 Palaiseau Cedex, France}
\email{yvan.martel@polytechnique.edu}
\thanks{Y. Martel was partially supported by ERC 291214 BLOWDISOL}
\author{Claudio Mu\~noz}
\address{CNRS and Departamento de Ingenier\'{\i}a Matem\'atica and Centro
de Modelamiento Matem\'atico (UMI 2807 CNRS), Universidad de Chile, Casilla
170 Correo 3, Santiago, Chile.}
\email{claudio.munoz@math.u-psud.fr, cmunoz@dim.uchile.cl}
\thanks{C. Mu\~noz would like to thank the Laboratoire de Math\'ematiques d'{}Orsay where part of this work was completed. His work was partly funded by Chilean research grants FONDECYT  1150202, Fondo Basal CMM-Chile, and Millennium
Nucleus Center for Analysis of PDE NC130017}
\subjclass{35J61}
\begin{document}

\begin{abstract}
We consider a classical equation known as the $\phi^4$ model in one space dimension.
The kink, defined by $H(x)=\tanh(x/{\sqrt{2}})$, is 
an explicit stationary solution of this model. From a  result of Henry, Perez and Wreszinski \cite{MR678151}  it is known that  the kink is orbitally stable with respect to small perturbations of the initial data in the energy space.
In this paper we show asymptotic stability of the kink for odd perturbations in the energy space. The proof is based on Virial-type estimates partly inspired from previous works of Martel and Merle on asymptotic stability of solitons for the generalized Korteweg-de Vries equations (\cite{MR1753061}, \cite{zbMATH01631995}).
However, this approach has to be adapted  to additional difficulties, pointed out by Soffer and Weinstein \cite{MR1681113} in the case of general Klein-Gordon equations with potential: the interactions of the so-called internal oscillation mode with the radiation, and the different rates of decay of these two components of the solution in large time.
\end{abstract}

\maketitle

\section{Introduction}
\subsection{Main result}
In this paper we consider a classical nonlinear equation known as the $\phi^4$ model, often used in quantum field theory and other areas of physics. We refer the reader for instance to  \cite{MR2282481,MR1402248,MR2318156,MR2068924,MR1446491}.
In one space dimension, this equation writes
\begin{equation}\label{wave ac}
\partial_t^2\phi  - \partial_x^2\phi = \phi-\phi^3, \quad (t,x)\in \RR\times\RR.
\end{equation}
Recall that 
\be\label{H}
H(x)=\tanh\left(\frac{x}{\sqrt{2}}\right),
\ee
is a time-independent solution of \eqref{wave ac}, called
the {\it  kink}.  Indeed, $H$ is the unique (up to multiplication by $-1$), bounded, odd solution   of the equation
\begin{equation}
- H'' = H-H^3.
\label{hetero}
\end{equation}
Note also that \eqref{wave ac} is invariant under time and space translations and under the Lorentz transformation.
Written in terms of the pair $(\phi, \partial_t \phi)$, another important property of (\ref{wave ac}) is the fact that the \emph{energy}  
\be\label{Energy0}
E(\phi,\partial_t \phi):=\int \frac{1}{2}|\partial_t \phi|^2+\frac{1}{2}|\partial_x\phi|^2+\frac{1}{4}\left(1-|\phi|^2\right)^2,
\ee
is formally conserved along the flow. Note that the energy of the kink $(H,0)$ 
is finite and thus  $H^1\times L^2$ perturbations of the kink are referred as \emph{perturbations in the energy space}. By standard arguments, the model \eqref{wave ac} is  locally well-posed for  initial data 
$(\phi(0),\partial_t \phi(0))$ of the form $(H+\varphi_0^{in},\varphi_1^{in})$ where $(\varphi_0^{in},\varphi_1^{in}) \in H^1\times L^2$. Note also that for odd initial data, the solution of \eqref{wave ac} is odd.

As for the long time behavior of solutions of \eqref{wave ac}, we recall that Henry, Perez and Werszinski \cite{MR678151} proved orbital stability of kink with respect to small perturbations in the energy space (see Proposition \ref{henry} and its proof for the special case of odd perturbations). 
For the rest of this paper, we   work in such framework, and we   consider only odd perturbations.

Set
\be\label{eqvarphi0}
 \phi = H + \varphi_1 ,\quad   \partial_t \phi   = \varphi_2,\quad 
\varphi(t) = \left(\begin{aligned}
& \varphi_1(t) \\
& \varphi_2(t)
\end{aligned}\right).
\ee
Then, $\varphi$ satisfies
\begin{equation}\label{eqvarphi}\left\{\begin{aligned}
& \partial_t \varphi_1 = \varphi_2 \\ 
& \partial_t \varphi_2 = - \mathcal L \varphi_1 - (3 H \varphi_1^2 + \varphi_1^3), 
\end{aligned}\right.\end{equation}
where $\mathcal L$ is the linearized operator around $H$:
\begin{equation}\label{defL}
\mathcal L = - \partial_{x}^2 - 1 + 3 H^2 = - \partial_{x}^2 + 2 - 3 \sech^2\left(\frac x{\sqrt{2}}\right).
\end{equation}

Our main result is the  asymptotic stability of the kink of the $\phi^4$ model with respect to odd perturbations in the energy space.

\begin{theorem}\label{TH1}
There exists $\ve_0 >0$ such that, for any $\ve\in (0,\ve_0)$ and  for any odd $\varphi^{in}\in H^1\times L^2$ with
\[
\|\varphi^{in} \|_{H^1\times L^2}< \ve,
\]
the  global solution  $\varphi$ of (\ref{eqvarphi})  with initial data $\varphi(0)=\varphi^{in}$
satisfies
\be\label{Conclusion_0}
\lim_{t \to \pm\infty}   \|\varphi(t)\|_{H^1(I)\times L^2(I)} =0,
\ee 
for any bounded interval  $I\subset \RR$.
\end{theorem}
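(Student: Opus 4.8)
The plan is to combine the orbital stability of Proposition~\ref{henry} with two virial-type estimates: a first one, in the spirit of Martel--Merle for gKdV, controlling the component of the perturbation orthogonal to the internal oscillation mode, and a second one designed to extract the Fermi golden rule damping of that mode. It suffices to treat $t\to+\infty$, the case $t\to-\infty$ being identical after $t\mapsto-t$. By Proposition~\ref{henry} the solution is global and $\|\varphi(t)\|_{H^1\times L^2}\le C\ve$ for all $t$, which supplies the a priori smallness used throughout. On odd functions, the operator $\mathcal L$ of \eqref{defL} has exactly one eigenvalue below its continuous spectrum $[2,\infty)$, namely $\lambda^2:=\tfrac32$, with odd eigenfunction $Y\propto\tanh(x/\sqrt2)\sech(x/\sqrt2)$; the translation mode $H'$ is even and drops out for odd data, so that $\langle\mathcal L w,w\rangle\gtrsim\|w\|_{H^1}^2$ for odd $w\perp Y$ (the coercivity also behind Proposition~\ref{henry}). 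I would then decompose $\varphi_1=z_1 Y+u_1$, $\varphi_2=z_2 Y+u_2$ with $\langle u_1,Y\rangle=\langle u_2,Y\rangle=0$. Substituting in \eqref{eqvarphi}, the pair $(u_1,u_2)$ solves the clean system $\partial_t u_1=u_2$, $\partial_t u_2=-\mathcal L u_1-P^\perp(3H\varphi_1^2+\varphi_1^3)$, where $P^\perp$ projects off $Y$; and $z=(z_1,z_2)$ solves $\dot z_1=z_2$, $\dot z_2=-\lambda^2 z_1+\mathcal N$ with $\mathcal N=-\|Y\|^{-2}\langle 3H\varphi_1^2+\varphi_1^3,Y\rangle=\kappa z_1^2+O(|z|\,\|u_1\|_{L^2_{\mathrm{loc}}})+O(|z|^3)$ and $\kappa=-3\|Y\|^{-2}\int HY^3\ne0$. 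The key fact is that the forcing of the $u$-equation contains the distinguished quadratic term $3z_1^2 HY^2$.

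\emph{First virial estimate.} I plan to use a localized virial functional of the form $\mathcal I_A(t)=\int(\chi_A\,u_2\,\partial_x u_1+\tfrac12\chi_A'\,u_1 u_2)\,dx$, with $\chi_A$ an odd bounded weight, $\chi_A(x)\sim x$ near $0$, $\chi_A'>0$, saturating at scale $A$, so that $|\mathcal I_A(t)|\lesssim A\ve^2$. Differentiating along the $u$-system and integrating by parts, the velocity and potential terms largely cancel, and after absorbing the localized remainder $\int\chi_A V' u_1^2$ (with $V=2-3\sech^2(x/\sqrt2)$) by means of a weighted coercivity estimate for $\mathcal L$ on odd functions orthogonal to $Y$ — itself deduced from the coercivity above by a compactness argument — one is left with an inequality of the schematic form
\[
\frac{d}{dt}\mathcal I_A(t)\le -c\!\int\rho_A\big(|\partial_x u_1|^2+u_2^2+u_1^2\big)\,dx+C\!\int z_1^4\,dx+\big(\text{errors }O(\ve)\cdot(\text{good})+O(A^{-2})\big),
\]
where $\rho_A>0$ is localized at scale $A$ and the $z_1^4$ term is exactly the square of the $3z_1^2 HY^2$ forcing. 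Integrating in time yields the spacetime bound $\int_0^\infty\!\!\int\rho_A(|\partial_x u_1|^2+u_2^2+u_1^2)\,dx\,dt\lesssim A\ve^2+\int_0^\infty z_1^4\,dt$.

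\emph{Second virial estimate, the main obstacle.} What remains, and is the heart of the proof, is to show $\int_0^\infty z_1^4\,dt<\infty$ together with $z_1(t),z_2(t)\to0$. This is the Fermi golden rule: since $(2\lambda)^2=6$ lies strictly inside the continuous spectrum $[2,\infty)$ of $\mathcal L$ and $\kappa\ne0$, the quadratic self-interaction of the internal mode forces the second harmonic resonantly, energy is radiated to infinity, and $|z_1(t)|$ decays only like $t^{-1/2}$ — which is exactly why the first estimate must produce the integrable quantity $z_1^4$ rather than $z_1^2$. To make this rigorous I would: (i) perform a normal-form-type substitution $u\mapsto u-z_1^2\Psi$, with $\Psi$ a bounded (non-$L^2$) solution of $(\mathcal L-4\lambda^2)\Psi=\const\cdot HY^2$, transferring the resonant forcing to $z_1^2\Psi$; (ii) run a second virial-type identity on the modified dispersive variable, whose dissipative term now carries the radiated power; (iii) combine it with the internal energy identity $\tfrac12\tfrac{d}{dt}(z_2^2+\lambda^2 z_1^2)=z_2\mathcal N$ to reach a coercive lower bound $\Gamma\int_0^\infty z_1^4\,dt\lesssim\ve^2+(\text{controlled errors from the first estimate})$ with $\Gamma>0$. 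The hard points I expect are: verifying that the Fermi golden rule constant $\Gamma$ — a spectral integral of $\mathcal L$ against $HY^2$ at energy $6$ — is strictly positive (this is where asymptotic stability genuinely uses that the internal frequency, and its double, sit below and above the continuous spectrum respectively); controlling the interaction terms involving $\Psi$, which does not decay in space; and making every error time-integrable despite the slow $t^{-1/2}$ decay of $z_1$.

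\emph{Conclusion.} Once $\int_0^\infty z_1^4\,dt<\infty$ is known, the first estimate closes: $\int_0^\infty\!\!\int\rho_A(|\partial_x u_1|^2+u_2^2+u_1^2)\,dx\,dt<\infty$. Combined with the uniform bound $|\tfrac{d}{dt}\int_I(|\partial_x u_1|^2+u_2^2+u_1^2)\,dx|\lesssim\ve^2$ coming directly from the equation, this forces the local energy of $u$ to tend to $0$. Together with $z_1(t),z_2(t)\to0$ from the Fermi golden rule analysis and $\|Y\|_{H^1(I)}<\infty$ for any bounded $I$ (taking $A$ large enough that $\rho_A$ is bounded below on $I$), we conclude $\|\varphi(t)\|_{H^1(I)\times L^2(I)}\to0$, which is \eqref{Conclusion_0}.
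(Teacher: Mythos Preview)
Your architecture --- orbital stability, spectral decomposition along $Y_1$, virial estimates exploiting the Fermi golden rule, local-energy conclusion --- matches the paper's. The substantive divergence is in how the resonant $z_1^2$ forcing is handled, and this is where the paper's key idea sits. You propose a normal form $u\mapsto u-z_1^2\Psi$ with $\Psi$ bounded but non-$L^2$ solving $(\mathcal L-6)\Psi=\const\cdot HY^2$, and you rightly flag the non-decay of $\Psi$ as the hard point. The paper avoids it entirely. First it splits $2z_1^2=|z|^2+\alpha$ (with $\alpha=z_1^2-z_2^2$, $\beta=2z_1z_2$) and absorbs the slowly varying piece by the \emph{decaying} substitution $v_1=u_1+|z|^2 q$, where $q\in H^1$ solves $\mathcal L q=f$; the residual forcing $-\alpha f$ then oscillates cleanly at frequency $2\mu$. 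Second, instead of a normal form, it adds to the virial $\mathcal I$ a correction $\mathcal J=\alpha\int v_2 g-2\mu\beta\int v_1 g$ and chooses $g$ to solve $(\mathcal L-4\mu^2)g=\psi f'+(a+\tfrac12)\psi' f$, the constant $a$ being tuned so that the right side is orthogonal to the generalized eigenfunction $\operatorname{Im} k$ at energy $6$. Precisely because the Fermi golden rule is nonzero, such an $a$ exists and makes $g$ \emph{Schwartz}. The cross term in $-\tfrac{d}{dt}(\mathcal I+\mathcal J)$ then collapses to the coercive $\alpha^2\int fg>0$, and no non-$L^2$ object ever enters the argument. This is the missing device: the FGR is used not to extract damping through a non-decaying $\Psi$, but to manufacture a decaying $g$.

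Two further points. The quadratic form produced by $\dot{\mathcal I}$ is not a weighted $\mathcal L$ but $\mathcal B(v_1)=\int\psi'(\partial_xv_1)^2-\tfrac14\int\psi'''v_1^2-3\int\psi HH'v_1^2$; its positivity under $\langle v_1,Y_1\rangle=0$ and oddness is delicate, and the paper proves it for the specific weight $\psi(x)=8\sqrt2\tanh(x/8\sqrt2)$ via the substitution $w=\sqrt{\psi'}\,v_1$ and explicit spectral bounds on $-\partial_x^2-V$ --- your generic scale-$A$ weight with a compactness appeal to coercivity of $\mathcal L$ does not obviously deliver this. And to upgrade control of $\alpha^2$ to control of $|z|^4=\alpha^2+\beta^2$, the paper uses the auxiliary quantity $\gamma=\alpha\beta$, whose derivative $\dot\gamma=2\mu(\beta^2-\alpha^2)+O(\varepsilon\cdot\text{good})$ trades one for the other; a third small functional $\int\sech(x/2\sqrt2)v_1v_2$ recovers $\|v_2\|_{L^2_\omega}^2$. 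All four are combined into a single $\mathcal K$ with $\dot{\mathcal K}\gtrsim|z|^4+\|v\|_{H^1_\omega\times L^2_\omega}^2$, giving the spacetime bound in one stroke rather than the two-stage scheme you outline.
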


\medskip

To our knowledge, Theorem \ref{TH1} is the first result on the asymptotic stability of the kink for the standard one dimensional $\phi^4$ model, 
which we see as a classical question in the field. Several previous related results, outlined Section \ref{related res} below,  suggest that there have been several attempts to solve this problem by various techniques.
Moreover, a corollary of Theorem \ref{TH1} is  that no wobbling kinks (periodic in time, topologically nontrivial solitons) close to the kink exist, which partially settles  another longstanding  open question in the field (see Remark \ref{sg kink}). 
Finally, we believe that our approach, elementary and self-contained, is at the same time  general and flexible  and opens a new way to prove similar results for related models.

\medskip 

\begin{remark} We   comment here on the notion of asymptotic stability introduced in Theorem \ref{TH1}.
Observe that if a solution $\varphi$ of \eqref{eqvarphi} satisfies 
$\lim_{t \to +\infty}   \|\varphi(t)\|_{H^1 \times L^2} =0$
then by the orbital stability resut \cite{MR678151}, $ \varphi(t)\equiv 0$ for all $t\in \R$.
Thus, the notion of ``local'' asymptotic stability in the energy space as in \eqref{Conclusion_0} is in some sense optimal.
\end{remark}

The statement of  Theorem \ref{TH1} does not contain any information on the decay rate of $\|\varphi(t) \|_{H^1(I)\times L^2(I)}$ as $t\to \pm \infty$. 
To give a precise answer to this question from our proof, we need to introduce a decomposition of $\varphi(t)$ along the discrete and continuous parts of the spectrum of  $\mathcal L$ in \eqref{defL}, which respectively  correspond to internal oscillations and radiation.
The operator $\mathcal L$ is  classical and it is well-known  (see e.g. \cite{MR922041}) that  
\[
\mathrm{spec}\, \mathcal L=\Big\{0, \frac{3}{2}\Big\}\cup [2, +\infty).
\]
The discrete spectrum consists of  simple eigenvalues $\lambda_0=0$ and $\lambda_1=\frac{3}{2}$, with $L^2$ normalized eigenfunctions, respectively given by
\be\label{phi_0}
Y_0(x):=  \frac 1 2 \sech^2\Big(\frac{x}{\sqrt{2}}\Big),\quad \langle Y_0,Y_0\rangle=1,
\ee
and
\be\label{phi_1}
Y_1(x) := 2^{-3/4}3^{1/2} \tanh\left(\frac{x}{\sqrt{2}}\right) \sech\left(\frac{x}{\sqrt{2}}\right),\quad \langle Y_1,Y_1\rangle=1.
\ee
{\color{black} (Here and below $\langle F , G \rangle  := \int  FG$).}
Note that $Y_0(x)=\frac{\sqrt{2}}2 H'(x)$ is   related to the invariance of equation \eqref{wave ac} by space translation ; since we restrict ourselves to   odd perturbations of the stationary kink, this direction will not be relevant throughout this work.
In contrast, the eigenfunction $Y_1$, usually referred as the \emph{internal mode} of oscillation of the kink is not related to any invariance. It introduces serious additional difficulties and plays a key role in the analysis of the long time dynamics of $\varphi$.
We decompose $\varphi(t)$  into the form 
\be\label{DECO}
\begin{aligned}
\varphi_1(t,x) & =z_1(t)Y_1(x)+u_1(t,x),\quad\langle u_1(t),Y_1\rangle  =0,\\
\varphi_2(t,x) & = {\lambda}^{1/2}_1 z_2(t)Y_1(x)+u_2(t,x),\quad \langle u_2(t),Y_1\rangle= 0.
\end{aligned}
\ee
{\color{black}
Theorem \ref{TH1} will be the consequence of the following global estimate.
\begin{theorem}
Under the assumptions of Theorem \ref{TH1},
\be\label{Conclusion_1}
\int_{-\infty}^{+\infty} \left(|z_1(t)|^{4}+ |z_2(t)|^{4}\right) dt+ 
\int_{-\infty}^{+\infty}\int_{\RR}  \left( (\partial_x u_1)^2  + u_1^2  + u_2^2  \right)(t,x)  e^{-c_0 |x|}{dx dt}\lesssim  \|\varphi^{in} \|_{H^1\times L^2}^2 ,
\ee
for some fixed $c_0>0$. 
\end{theorem}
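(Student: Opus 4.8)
The plan is to deduce \eqref{Conclusion_1} by combining the orbital stability bound of Proposition~\ref{henry} with two Virial-type monotonicity formulas, one for the internal mode $(z_1,z_2)$ and one for the radiation part $(u_1,u_2)$, which are genuinely coupled and must be closed simultaneously by a bootstrap argument. For the set-up, note first that since the perturbation is odd and $Y_0$ is even, $\langle \varphi_i, Y_0\rangle \equiv 0$, so \eqref{DECO} is the only relevant modulation; moreover, by Proposition~\ref{henry} the solution is global and $N(t):=\|\varphi(t)\|_{H^1\times L^2}\lesssim \ve$ for all $t$, so every quadratic or higher contribution of $\varphi$ below is of lower order. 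Projecting \eqref{eqvarphi} onto $Y_1$ using \eqref{DECO} gives the oscillator $\dot z_1 = \lambda_1^{1/2} z_2$, $\dot z_2 = -\lambda_1^{1/2} z_1 - \lambda_1^{-1/2}\langle 3H\varphi_1^2 + \varphi_1^3,\, Y_1\rangle$, while $(u_1,u_2)$ satisfies a Klein--Gordon-type system $\partial_t u_1 = u_2$, $\partial_t u_2 = -\mathcal L u_1 + F$, where $F$ is the $\{Y_1\}^\perp$-projection of $-(3H\varphi_1^2+\varphi_1^3)$. Expanding $\varphi_1 = z_1 Y_1 + u_1$, the source $F$ contains the quadratic term $-3H Y_1^2 z_1^2$ (the resonant one), together with mixed $z_1 u_1$ terms and cubic terms; crucially, since $Y_1$ decays exponentially (see \eqref{phi_1}), the whole $z$-driven part of $F$ is exponentially localized in $x$.

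\textbf{Virial for the radiation.} Using the explicit (P\"oschl--Teller) structure of $\mathcal L$ in \eqref{defL}, so that $\mathcal L$ is conjugate, off its two eigendirections, to $-\partial_x^2 + 2$, I would introduce a bounded weight $\psi$ with $\psi'\ge 0$ and $\psi'$ concentrated near the origin (e.g.\ $\psi(x)=\tanh(x/A)$ at a fixed large scale $A$, possibly after the above conjugation) and study
\[
\mathcal I(t) = \int u_2\Big(\psi\,\partial_x u_1 + \tfrac12\psi'\, u_1\Big)\,dx .
\]
Differentiating in time, using the equation for $(u_1,u_2)$, and the coercivity $\langle \mathcal L u_1,u_1\rangle \ge 2\|u_1\|^2$ on $\{Y_0,Y_1\}^\perp$ (which holds since $u_1$ is odd, hence orthogonal to $Y_0$, and $\mathrm{spec}\,\mathcal L = \{0,\tfrac32\}\cup[2,\infty)$), the main terms produce $-c\int\big((\partial_x u_1)^2+u_1^2+u_2^2\big)\rho$ with $\rho(x)\gtrsim e^{-c_0|x|}$, while the coupling with $F$ yields errors bounded, via Cauchy--Schwarz and absorption into the coercive term, by $C\int(|z_1|^4+|z_2|^4)\,dt + \tfrac{c}{2}\int(\cdots)\rho + C N\!\int(\cdots)\rho$. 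Since $|\mathcal I(t)|\lesssim N(t)^2\lesssim \ve^2$, integrating over $\R$ gives $\iint\big((\partial_x u_1)^2+u_1^2+u_2^2\big)e^{-c_0|x|}\,dx\,dt \lesssim \ve^2 + \int(|z_1|^4+|z_2|^4)\,dt$.

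\textbf{Internal mode and the Fermi Golden Rule.} It remains to show $\int(|z_1|^4+|z_2|^4)\,dt\lesssim \ve^2$. Writing $z=z_1+iz_2$, the previous step gives $\dot z = -i\lambda_1^{1/2}z + O(|z|^2 + |z|\,\|u\|_{\mathrm{loc}} + \cdots)$, so $|z|^2$ is conserved to quadratic order; the quartic dissipative correction comes from the resonant interaction of the $z^2$-forcing with the continuous spectrum, which is effective precisely because $4\lambda_1 = 6 \in \mathrm{spec}\,\mathcal L = \{0,\tfrac32\}\cup[2,\infty)$. Concretely, I would solve for the part of $u_1$ forced at frequency $2\lambda_1^{1/2}$, substitute back, and perform a normal-form change of variables $z\mapsto \tilde z = z + (\text{quadratic in }(z,u))$ to obtain $\frac{d}{dt}|\tilde z|^2 = -c_{\mathrm{FGR}}|\tilde z|^4 + (\text{absorbable errors})$ with $c_{\mathrm{FGR}}>0$, the positivity being the explicit non-vanishing of the projection of $HY_1^2$ onto the generalized eigenfunction of $\mathcal L$ at energy $6$, which can be checked directly from \eqref{defL}. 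Equivalently, one may introduce an auxiliary mixed $z$--$u$ Virial functional $\frac{d}{dt}\langle u_1,\zeta\rangle$ with $\zeta$ chosen to capture this resonance. Integrating in time, $\int|\tilde z|^4\,dt\lesssim |\tilde z(0)|^2\lesssim \ve^2$, hence $\int(|z_1|^4+|z_2|^4)\,dt\lesssim\ve^2$; feeding this back into the radiation Virial closes \eqref{Conclusion_1}.

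\textbf{Main obstacle.} As anticipated by Soffer--Weinstein, the real difficulty is the simultaneous treatment of the two components, which decay at different rates: the Virial for $u$ must absorb a forcing that is only $O(|z|^2)$ and merely $L^2_t$, whereas the Fermi Golden Rule argument for $z$ needs exactly the exponentially localized control of $u$ produced by that Virial, so neither estimate closes in isolation and one must run a bootstrap on $\int(|z_1|^4+|z_2|^4)\,dt + \iint(\cdots)e^{-c_0|x|}\,dx\,dt$. Getting the normal-form transformation right, fixing the sign of $c_{\mathrm{FGR}}$, and verifying that every mixed error term is either cubic in the small quantities or carries a small constant is where the genuine work lies.
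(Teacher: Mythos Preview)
Your high-level picture is correct, but there is one concrete error and one substantial divergence from the paper's actual mechanism.

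The error: the Virial $\mathcal I=\int u_2\big(\psi\,\partial_x u_1+\tfrac12\psi' u_1\big)$ does \emph{not} produce any $\int u_2^2\,\rho$. When you differentiate, the $u_2^2$ contributions cancel exactly (that is precisely why the weight $\tfrac12\psi'$ is there); what remains is a quadratic form $\mathcal B(v_1)$ in $v_1$ alone, see \eqref{moondawn}--\eqref{B_0}. The paper recovers $\|v_2\|_{L^2_\omega}^2$ from a separate, elementary functional $\int\sech(x/2\sqrt2)\,v_1v_2$ (estimate \eqref{10trois}), and recovers $\beta^2=(2z_1z_2)^2$ from $\gamma=\alpha\beta$ (estimate \eqref{10deux}). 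Four functionals are combined in \eqref{dtK}, not one.

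The divergence: the paper does \emph{not} implement the Fermi Golden Rule via a Soffer--Weinstein normal form on $z$; that route requires solving the resonantly forced dispersive equation for $u$ and controlling the remainder by dispersive/local-decay estimates, which is exactly the machinery the paper avoids. Instead it first makes the substitution $v_1=u_1+|z|^2q$ with $\mathcal L q=f$, which converts the forcing $-2z_1^2 f$ on $u_2$ into $-\alpha f$ on $v_2$ (here $\alpha=z_1^2-z_2^2$), and then adds to $\mathcal I$ the mixed functional $\mathcal J=\alpha\int v_2\,g-2\mu\beta\int v_1\,g$, with $g$ determined by \eqref{def g}. One gets $-\tfrac{d}{dt}(\mathcal I+\mathcal J)=\mathcal D(v_1,\alpha)+(\text{cubic errors})$ for a single quadratic form in the \emph{pair} $(v_1,\alpha)$, and the whole FGR mechanism collapses to the algebraic coercivity $\mathcal D(v_1,\alpha)\gtrsim\alpha^2+\|v_1\|_{H^1_\omega}^2$ (Lemma~\ref{B coerc}), which in turn rests on the numerical positivity $\langle f,g\rangle>0$ in \eqref{fg} and a second finite-dimensional check \eqref{numer 2}. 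Your sequential scheme---Virial for $u$ modulo $C\int|z|^4$ with $C$ not small, then a separate normal form for $z$---is thus replaced by a single differential inequality with no bootstrap. Your parenthetical ``auxiliary mixed $z$--$u$ Virial functional $\tfrac{d}{dt}\langle u_1,\zeta\rangle$'' is in fact the right idea and is essentially $\mathcal J$; but it is the \emph{replacement} for the normal form, not an equivalent reformulation of it.
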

}

The information given by  \eqref{Conclusion_1} on the solution $\varphi(t)$ may seem rather weak compared to other existing results of asymptotic stability, but it is not clear to us whether a stronger convergence result can be obtained for general perturbations in the energy space.
Estimate \eqref{Conclusion_1}  follows from the introduction of a new  Virial functional for \eqref{eqvarphi}. This approach based on  Virial functionals is similar and inspired by works of Martel and Merle \cite{MR1753061}, \cite{zbMATH01631995} on the asymptotic behaviour of solitons in the subcritical generalized Korteweg de Vries  equations and of Merle and Raphael \cite{MR2150386} in the context of the blow-up dynamics for the mass critical nonlinear Schr\"odinger equation. This remarkable coincidence shows a deep connection between  dispersive and wave problems of seemingly different nature, and the generality of such arguments.  However, a  new, key  feature in our approach  is to  adapt   the Virial functional to take into account the internal oscillation mode $(z_1,z_2)$ associated with the direction of $Y_1$. This mode is expected to have a slower decay rate as $t\to \pm\infty$, as suggested in \eqref{Conclusion_1} and we believe that in general 
$
\int_{-\infty}^{+\infty} (|z_1(t)|^2+|z_2(t)|^2)dt = +\infty.
$

\begin{remark}\label{re:general} 
In our opinion, the case of odd perturbations contains the most difficult and at the same time the most original aspect of the problem which is the exchange of  energy between the internal oscillations and the radiation, and the discrepancy of decay rates of the different components of the perturbation.
To address this issue we have developed new tools both for the linear and the nonlinear parts of the problem. 
Asymptotic stability  of solitons for   nonlinear Schr\"odinger equations or of kinks in the relativistic Ginzburg-Landau equation in the space of odd perturbations has also been considered previously for instance in  \cite{cuccagna_3,MR2835867}. From this point of view the oddness hypothesis in Theorem \ref{TH1} is neither new nor artificial. 
This being said, we conjecture  the asymptotic stability result  to be true for general perturbations in the energy space. 
This would require   taking into account the translation invariance of the $\phi^4$ model by modulation theory, which is standard in this type of  problems (see e.g. \cite{weinstein,buslaev,cuccagna_4,MR1753061}). We expect that to treat the general case, a more refined analysis of the Virial functional introduced in Section 4 of this paper will be needed.
\end{remark}

\begin{remark}\label{sg kink}
The sine-Gordon equation 
\[
\Box  u+\sin u=0, 
\]
 shares some qualitative properties with the $\phi^4$ model, as the existence of an
  explicit kink solution
\[
S(x) {\color{black}=4\arctan (e^x)}=4\mathrm{Arg}\,(1+ie^x), \qquad x\in \R.
\]
As a classical example of integrable scalar field equation, it also 
possesses other exceptional solutions, among them a one parameter family of {\rm{odd, periodic}} solutions called \emph{wobbling kinks}, given explicitly by
(see Theorem 2.6 in \cite{CQS})
\be\label{wobbler}
W_{\alpha}(t,x)=4\mathrm{Arg}\,\big(U_\alpha(t,x)+i V_\alpha(t,x)\big),
\ee
where  
\[
\begin{aligned}
U_\alpha(t,x)&=1+\frac{1+\beta}{1-\beta} e^{\,2\beta x}-\frac{2\beta}{1-\beta}e^{\,(1+\beta)x}\cos(\alpha t),\\
V_\alpha(t,x)&=\frac{1+\beta}{1-\beta} e^{\,x}+e^{\,(1+2\beta)x}-\frac{2\beta}{1-\beta}e^{\,\beta x}\cos(\alpha t),
\end{aligned}
\]
for $\alpha\in (0,1)$, $\beta=\sqrt{1-\alpha^2}$. Taking $t=\frac{\pi}{2\alpha}$ it is not hard to see that
\[
\Big\|(S,0)- \Big(W_\alpha(\frac{\pi}{2\alpha}),\partial_t W_\alpha(\frac{\pi}{2\alpha}) \Big)\Big\|_{H^1\times L^2}=\mathcal O(\beta).
\]
Since, at the same time  $W_\alpha$ is periodic in time, we see that \emph{the sine-Gordon kink is not asymptotically stable in the energy space} in the sense of Theorem \ref{TH1} (take $0<\beta\ll 1$). This is a remarkable difference between the two models at the nonlinear level.

We note further that   
\[
S(x)=2\pi +  \mathcal O(e^{\,-x}), \qquad W_{\alpha}(\frac{\pi}{2\alpha},x)=2\pi + \mathcal O(e^{\,-x}), \qquad x\to\infty,
\]
with similar formulas when $x\to -\infty$. From these facts and the explicit formula for $\partial_t W_{\alpha}(\frac{\pi}{2\alpha},x)$ it is not hard to see that
$S$ and $W_\alpha(\frac{\pi}{2\alpha})$ are also close in Sobolev norms with some exponential weight. It follows that even in a stronger topology asymptotic stability  does not hold for the sine-Gordon equation, in contrast to  
results proven for example in \cite{PW} for the generalized Korteweg-de Vries equation, or in \cite{cuc_cub_nls} for the cubic one dimensional nonlinear Schr\"odinger equation. 

The problem of constructing or proving non-existence of  wobbling kinks for the $\phi^4$ model attracted some attention in the past; for an early discussion, we refer the reader to Segur's work \cite{MR708660}. While there was formal and numerical evidence against the existence of  wobbling kinks \cite{kruskal_segur}, our result provides a rigorous proof of non-existence (at least in a neighborhood of the kink) that, to our knowledge, had been missing. 

\end{remark}

\subsection{Discussion of  related results}\label{related res}

As we have seen,  the question of stability of the kink, as a solution of \eqref{wave ac}, with respect to small and odd perturbations  reduces to the stability of the zero solution of the nonlinear Klein-Gordon (NLKG) equation \eqref{eqvarphi}. Similarly, the question of \emph{asymptotic stability} for $H$, in a suitable topological space, in principle reduces to the  problem of  ``scattering''  of small  solutions for \eqref{eqvarphi}. In particular  \eqref{eqvarphi} presents several well-known difficulties: it is a \emph{variable-coefficients} NLKG equation with both nontrivial quadratic and cubic nonlinearities, in one space dimension.

\medskip

The description of the long time behaviour for small solutions to NLKG equations has attracted the interest of many researchers during the last thirty years. Klainerman \cite{K1,K2} showed global existence of small solutions in $\R^{1+3}$ via the vector field method, assuming that the nonlinearity is quadratic. Similarly, Shatah \cite{Shatah1} considered the NLKG equation with quadratic nonlinear terms in $d\geq 3$ space  dimensions. By using Poincar\'e normal norms suitably adapted to the infinite dimensional Hamiltonian system, he showed global existence for small, sufficiently regular initial data in  Sobolev spaces. In both of these approaches the main point is to deal with the quadratic nonlinearity, which causes problems even in dimension $3$ and higher due to slow rate of decay for linear Klein-Gordon waves. The situation is known to be even more delicate in low dimensions $1$ and $2$.

\medskip

In one dimension, a fundamental work due to Delort \cite{Delort} (see also \cite{MR2056833} for the two dimensional setting) shows global existence of small solutions  not only for semilinear, but also for quasilinear Klein-Gordon equations. His result was subsequently improved and simplified by Lindblad and Soffer \cite{LS1,LS2} in the semilinear case,  assuming constant coefficients with a possibly variable coefficient in front of the cubic nonlinearity \cite{LS3}.   A similar conclusion, based on a delicate analysis using Fourier decomposition methods was obtained by Sterbenz \cite{Ste}. The $\phi^4$ model serves as one of the motivations of  \cite{LS3} and \cite{Ste} since the understanding of the asymptotic stability for $H$ is deeply related to the study of NLKG equations with variable coefficients and quadratic and cubic nonlinearities. However, 
the $\phi^4$ model do not fit the assumptions made in \cite{LS3} and \cite{Ste}. Finally, we mention the works by Hayashi and Naumkin \cite{HN,HN1} on  the modified scattering procedure for cubic and quadratic constant coefficients NLKG equations in one dimension.

\medskip

{\color{black}In addition to the aforementioned difficulties of NLKG equations with  quadratic and cubic nonlinearities, what makes problem \eqref{eqvarphi} challenging is the existence of an  internal mode of oscillation}. 
The fundamental work of Soffer and Weinstein \cite{MR1681113} seems to be the first in which the mechanism of the exchange of energy between the internal oscillations and radiation was fully explained for a class of nonlinear Klein-Gordon equations with potential (see also \cite{MR1664792} by the same authors). Although the models they considered do not include the $\phi^4$ model, they speculated (see page 19 in \cite{MR1681113} that  the phenomena of slow radiation for the $\phi^4$ model is due to a similar mechanism. Since the two problems are related we will  briefly discuss their approach. They study   the question of asymptotic stability of the vacuum state  $u=0$ for the following Klein-Gordon equation in $\R^3\times \R$:
\begin{equation}\label{sw 1}
\partial_t^2 u=(\Delta-V(x)-m^2) u+\lambda u^3, \quad\lambda\in \R,~\lambda\neq 0.
\end{equation}
Under some natural hypothesis  on the decay of the potential $V$ and assuming that:
\begin{itemize}
\item[(i)]
the operator $L_V=-\Delta + V+m^2$ has  continuous spectrum $\sigma_{\rm cont}=[m, \infty)$,  a single, positive discrete eigenvalue $\Omega^2<m^2$, and the bottom of the continuous spectrum is not  a resonance;

\smallskip

\item[(ii)] the Fermi Golden Rule holds (see Remark \ref{FGrule} below);
\end{itemize}  
they show that $u=0$ is asymptotically stable. Moreover, Soffer and Weinstein proved that the internal oscillation mode decays as $\mathcal O(|t|^{-1/4})$, while the radiation decays as $\mathcal O_{L^8}(|t|^{-3/4})$. From their result we see the anomalously slow time decay rate of the  solution is additionally complicated by the existence of different time decay rates of each component of the solution. This discordance seems to hold in general, and it is also a characteristic of our problem, as expressed for instance in (\ref{Conclusion_1}). 

\medskip

The idea of the proof in \cite{MR1681113} is first to project (\ref{sw 1}) onto the discrete and the continuous parts of the spectrum of $L_V$, with the corresponding components $\eta$ and $z$ satisfying, respectively, a nonlinear dispersive equation and a Hamiltonian system. In the second step the equation for $\eta$ is solved for a given $z$  and the result then substituted in the ODE for $z$. The last step is the identification of the equation of the amplitude $A(t)=|z(t)|$ using the Poincar\'e normal forms. The second step relies on dispersive theory and this is  where the assumption (i) is used; the importance of working on $\R^3$ is evident at this point since dispersive estimates improve with the dimension of space so that the estimates for the nonlinear terms can be closed.  

\medskip

As for problems more closely related to our result we   mention the work of  Kopylova and Komech \cite{MR2835867} (see also  \cite{MR2770013}) where the issue of asymptotic stability of the kink in the following relativistic Ginzburg-Landau equation is addressed
\begin{equation}
\label{kk 1}
\partial_t^2 u=\partial_x^2 u +F(u), \quad \mathrm{in}\ \R\times \R.
\end{equation}
The form of the  nonlinearity $F=-W'$, where $W$ is a smooth  double well potential, guarantees existence of a kink $U$. More specifically it is assumed that, with some $m>0$ and $a>0$:
\begin{equation}
\label{kk 2}
W(u)=\frac{m^2}{2} (u\mp a)^2+\mathcal O(|u\mp a|^{14}), \qquad \mathrm{as}\ u\to \pm a.
\end{equation}
This   assumption, which excludes the $\phi^4$ model, is essential to the method of \cite{MR2835867} which is based on Poincar\'e normal forms and dispersive estimates and is inspired by \cite{buslaev} (see also \cite{bus_per1, bus_per2}). Under further  hypothesis  of the same type as (i) and (ii) above, 
\cite{MR2835867} shows asymptotic stability of the kink $U$ with respect to odd perturbations. The authors obtains explicit rates of decay: $\mathcal O(|t|^{-1/2})$ for the internal oscillations, and $\mathcal O_{E_{-\sigma}}(|t|^{-1})$, where $E_{-\sigma}=(1+|x|)^\sigma H^1\times (1+|x|)^\sigma L^2_{\sigma}$, $\sigma>5/2$, for the radiation.   We note that in these works, because of the slow decay of the solutions of the free equation in  one dimension, the perturbation of the quadratic
potential has to be taken  sufficiently flat near the limit points $\pm a$ in order to  close the nonlinear  dispersive estimates. 

\medskip

It is also important to mention that Cuccagna \cite{MR2373326} proved   asymptotic stability of  planar wave fronts in the $\phi^4$ model in $\R^3$
(study of the one dimensional kink subject to three dimensional perturbations). 
The method used in this paper combines dispersive estimates by  Weder  \cite{MR1729096,MR1736195} (see also \cite{MR2096737}), together with Klainerman vectors fields and normal forms.
The fact that the space dimension is three with better decay estimates for free solutions is essential in order to close the nonlinear estimates.

\bigskip

\section{Outline of the proof}\label{sec:2}

\medskip

The method of the present  paper, based on the use of a Virial functional, is inspired by  the one introduced for the generalized KdV equation in \cite{MR1753061,zbMATH01631995}. This approach is both self-contained and elementary. Below we present the key ideas. 

\medskip

{1. {\it Spectral decomposition}.} It is essential to decompose the solution $\varphi$ of (\ref{eqvarphi}) to separate  the mode of internal oscillations (associated with the eigenfunction $Y_1$) from the radiation (associated with the continuous part of the spectrum).
Indeed, these components have  specific asymptotic behavior as $t\to +\infty$. With  the notation
{ 
\[
\langle F , G \rangle   := \int  FG,
\]
}
we define
\begin{equation}\label{defz}
z_1(t) := \langle \varphi_1(t) , Y_1 \rangle,\quad
z_2(t) := \frac 1{\mu} \langle \varphi_2(t) , Y_1 \rangle,\quad \mu := \sqrt{\frac 32},
\end{equation}
\begin{equation}\label{defu}
u_1(t) := \varphi_1(t)- z_1(t) Y_1,\quad
u_2(t) := \varphi_2(t)- \mu z_2(t) Y_1,
\end{equation}
so that
\be\label{OrthoU}
\forall t\in \R, ~ \langle u_1(t),Y_1(t)\rangle = \langle u_2(t),Y_1(t)\rangle=0.
\ee
Denote
\begin{equation}\label{defzu}
z(t) := \left(\begin{aligned}
& z_1(t) \\
& z_2(t)
\end{aligned}\right), \quad 
u(t):= \left(\begin{aligned}
& u_1(t) \\
& u_2(t)
\end{aligned}\right).
\end{equation}
Set
\begin{equation}\label{defab}
|z|^2(t) := z_1^2(t) + z_2^2(t), \quad \alpha(t): = z_1^2(t) - z_2^2(t),\quad \beta(t) := 2 z_1(t) z_2(t),
\end{equation}
For the higher order nonlinear terms we will use the notation (we omit the dependence on time)
\[
\mathcal O_3 := \mathcal O(|z|^3,|z| \|u\|,\|u\|^2),
\]
where $ \mathcal O(\cdot)$ refers to a function which is bounded by a linear combination of its arguments. In this section, we present the argument formally, that is why we do not specify which norm of $u$ is used. In  Section~5 we will give full  details on the control of the error terms.

\medskip

Since $\mathcal L Y_1 = \mu^2 Y_1$, we obtain
\begin{equation}\left\{\begin{aligned}
& \dot z_1 = \mu z_2 \\
& \dot z_2 = - \mu z_1 -\frac 3{\mu}  \langle HY_1^2,Y_1\rangle\,  z_1^2+F_z, \quad F_z= \mathcal O_3.
\end{aligned}\right.\end{equation}
In particular,
\begin{equation}\label{13deux}
\frac d{dt} (|z|^2)   = \mathcal O_3.
\end{equation}
and
\begin{equation}\label{eqofab}\left\{\begin{aligned}
& \dot \alpha = 2 \mu \beta + F_\alpha, \quad F_\alpha = \mathcal O_3\\
& \dot \beta  = - 2 \mu \alpha+ F_\beta, \quad F_\beta=   \mathcal O_3;
\end{aligned}\right.
\end{equation}
Moreover, thanks to \eqref{defu} and \eqref{eqvarphi}, one checks that
\begin{equation}\label{eqofu}\left\{\begin{aligned}
& \dot u_1 = u_2 \\
& \dot u_2 =   - \mathcal L u_1 - 2 z_1^2 f + F_u,\quad F_u= \mathcal O_3;
\end{aligned}\right.\end{equation}
where $f$ is an  odd Schwartz function  given by the expression 
\begin{equation}\label{deff}
\begin{aligned}
f  :=\frac 32 \left( HY_1^2 - \langle HY_1^2  , Y_1\rangle Y_1 \right),
	 \quad &  \hbox{so that $\langle f , Y_1\rangle=0,$} \\
& \hbox{and $\forall x\in \RR,$
$|f(x)|+|f'(x)|\lesssim e^{-\frac {|x|}{\sqrt{2}}}$}.
\end{aligned}
\end{equation} There is a simple way to replace the term $z_1^2 f$  by a term involving only $\alpha$ without changing the structure of the problem. To this end we introduce a change of unknown
\begin{equation}\label{defv}
\begin{aligned}
v_1 (t,x)& := u_1(t,x) +   |z|^2(t) q(x), \quad \hbox{where} \quad \mathcal L q (x)=  f(x);\\
v_2(t,x) &: = u_2(t,x),
\end{aligned}
\end{equation}
and
\[
v(t)  := \left(\begin{aligned}
& v_1(t) \\
& v_2(t)
\end{aligned}\right).
\]
Note that the existence of a unique odd solution $q\in H^1(\R)$ of $\mathcal L q =  f$ follows from standard ODE arguments.  Moreover, $q$ satisfies
\begin{equation}\label{decayq}
\forall x\in \RR,\quad
|q(x)|+|q'(x)|\lesssim e^{-\frac{|x|}{\sqrt{2}}}.
\end{equation}
 Then,
\begin{equation}\label{eqofv}\left\{\begin{aligned}
  \dot v_1 &= v_2 + F_1, \quad F_1=\mathcal O_3, \\
  \dot v_2 &=
        - \mathcal L v_1 -   \alpha f  +F_2,  \quad F_2=\mathcal O_3   .
\end{aligned}\right.\end{equation}
Note that 
\[
0=\langle f,Y_1\rangle = \langle \mathcal L q,Y_1\rangle = \langle q, \mathcal L Y_1\rangle = \mu^2 \langle   q,Y_1\rangle
\]
and thus $\langle v_1, Y_1\rangle =\langle v_2, Y_1\rangle=0$.

\medskip

2. {\it{Orbital stability}}. 
Using  the stability result of Henry, Perez and Werszinski \cite{MR678151} (see Proposition \ref{henry} for a short proof of this result for odd perturbations), we infer that if $\varphi^{in}$ is small enough then $\varphi(t)$ is uniformly small in $H^1\times L^2$ and,   in particular, it is  global in time  in the energy space.
It follows that $(u_1,u_2)$, $z$ and thus $(v_1,v_2)$  and $\alpha, \beta$ are also  small, uniformly in time. 

\medskip

At this point, the system in $(v_1,v_2,\alpha,\beta)$ can be studied by Virial argument. We present the formal argument, discarding for a moment higher order terms.

\medskip

3. {\it{Virial type arguments}}. 
The ultimate goal of Virial arguments is to prove the following estimate, whose proof   requires the use of several functionals
\begin{equation}\label{13un}
\int_{-\infty}^\infty \left(|z(t)|^4+\|v(t)\|_{H^1_{\omega}\times L^2_{\omega}}^2\right) dt <\varepsilon^2.
\end{equation}

First, let
\begin{equation}
\label{def I}
\mathcal I  := \int \psi (\partial_x v_1) v_2 + \frac 12 \int \psi' v_1 v_2,
\end{equation}
\begin{equation}
\label{def J}
\mathcal J := \alpha \int v_2 g - 2 \mu \beta \int v_1 g,
\end{equation}
where $\psi$ and $g$  are functions to be chosen ($\psi$ is bounded, increasing; $\psi'$ and $g$ are Schwartz functions). Using the equations for $(v_1,v_2)$ and $\alpha$, $\beta$, we find
$$
-\frac d{dt} (\mathcal I + \mathcal J) = \mathcal B (v_1) + \alpha \langle  v_1, \tilde h\rangle + \alpha^2 \langle f,g\rangle
+\varepsilon\,  \mathcal O\left(|z|^4,\|v\|_{H^1_{\omega}\times L^2_{\omega}}^2\right).
$$
where $\mathcal B$ is a quadratic form and $\tilde h$ is a given Schwartz function.
Here, $H^1_{\omega}\times L^2_{\omega}$  means the $H^1\times L^2$ norm of $v$ with a suitable exponential weight, see \eqref{nloc} for a precise definition.

Next, one proves, assuming the orthogonality $\langle v_1, Y_1\rangle =0$ and the oddness of  $v_1$, that for a suitable choice of function $g$, the following coercivity property holds
$$
\mathcal B (v_1) + \alpha \langle  v_1, \tilde h\rangle + \alpha^2 \langle f,g\rangle \gtrsim \|v_1\|^2_{H^1_{\omega}} +\alpha^2.
$$
This is the key estimate of this paper. Note that the choice of function $g$ is related to the Fermi-Golden rule, see Section 3.2 for details.
We thus obtain
\begin{equation}\label{12un}
-\frac d{dt} (\mathcal I + \mathcal J) 
\gtrsim \alpha^2 + \|v_1\|^2_{H^1_{\omega}} 
+\varepsilon \,\mathcal O\left(|z|^4,\|v\|_{H^1_{\omega}\times L^2_{\omega}}^2\right).
\end{equation}

\medskip

Second, let $\gamma = \alpha \beta$. Then, 
\begin{equation}\label{12deux}
\dot \gamma = 2 \mu (\beta^2 - \alpha^2) +   \varepsilon \, \mathcal O\left(|z|^4,\|v\|_{H^1_{\omega}\times L^2_{\omega}}^2\right).
\end{equation}

Finally, by direct computations, it can be checked that
\begin{equation}\label{12trois}
\frac d{dt} \int \sech\left(\frac x{2\sqrt{2}}\right)v_1 v_2
\gtrsim \|v_2\|_{L^2_{\omega}}^2 + O \left(|z|^4,\|v\|_{H^1_{\omega}\times L^2_{\omega}}^2\right).
\end{equation}

Since
$
|z|^4 = \alpha^2+\beta^2,
$
we check that for $\varepsilon>0$ small enough, integrating in time a suitable linear combination of \eqref{12un}, \eqref{12deux} and \eqref{12trois} gives \eqref{13un}.

\medskip

4. {\it {Convergence to the zero state for a weighted norm}}. From \eqref{13un}, one deduces
\begin{equation}\label{13trois}
\|v_1(t)\|_{H^1_{\omega}} +\|v_2(t)\|_{L^2_{\omega}} + |z(t)| \to 0 \quad \hbox{as $t\to \infty$},
\end{equation}
which implies \eqref{Conclusion_0}.

Indeed, first, from \eqref{13un} it follows that there exists a sequence $t_n\to +\infty$ such that 
\[ \lim_{n\to +\infty}\|v(t_n)\|_{H^1_{\omega}\times L^2_{\omega}} + |z(t_n)|=0.
\]

For $z(t)$, from \eqref{13deux}, 
\[
\left| \frac d{dt} |z|^4\right|
\lesssim |z|^3 \left(|z|^2 + \|v\|_{H^1_{\omega}\times L^2_{\omega}}^2\right).
\]
Integrating on  $[t,t_n]$, passing to the limit $n\to +\infty$ and using \eqref{13un}, 
we deduce that  $\lim_{t\to +\infty} |z(t)|=0$.

For $v(t)$, we consider an   energy type quantity (at the linear level)
\[
\mathcal H (t) = \int \left( |\partial_x v_1|^2 + 2 |v_1|^2 + |v_2|^2 \right)
\sech\left(\frac x{2 \sqrt{2}}\right),
\]
and we check that
\[
|\dot{\mathcal H}(t)|\lesssim \left( |z|^4 + \|v\|^2_{H^1_{\omega}\times L^2_{\omega}} \right)
\]
As before, integrating on  $[t,t_n]$, and using \eqref{13un}, we deduce that $\lim_{t\to +\infty} \mathcal H(t)=0$, which proves \eqref{13trois}.

\medskip

From the above sketch it is evident that our approach is  different from the ones briefly described in Section \ref{related res} in that it does neither  rely on dispersive estimates nor on normal forms. In particular we do not use the non resonance condition\footnote{Since we we work in the odd energy space  the bottom of the continuous spectrum is  non resonant. However,  this fact is never used explicitly or implicitly.}. 
In this sense our method allows for minimal hypothesis, is robust and possibly applicable to other type of problems. In particular our method should be applicable to  generic, analytic, nonlinear perturbation of the sine-Gordon equation of the form $\sin u+h(u)$, $h(u)=\sum_{k=2}^\infty a_k u^{2k+1}$, $h(u-\pi)=-h(-u)$. Recall the related fact that breathers in the sine Gordon equation  do not persist under generic analytic perturbations (see \cite{denzler} and the references therein).

\section{Preliminaries}\label{PRELIM}

\subsection{Orbital stability of the kink}\label{orb stab}
We recall briefly
the proof of the stability result of Henry, Perez and Werszinski \cite{MR678151} restricted to odd perturbations. First, we have the  following energy conservation  for $\varphi$.
For all $t$ so that $\varphi(t)$ exists in the energy space and $\varphi(0)=\varphi^{in}$, it holds
\begin{equation}\label{envarphi}
\mathcal E(\varphi(t)): =   \int \varphi_2^2(t) + \langle \mathcal L \varphi_1(t), \varphi_1(t)\rangle
+2 \int H \varphi_1^3(t) + \frac 12 \int \varphi_1^4(t) =\mathcal E(\varphi^{in}).
\end{equation}
This result is a simple consequence of the energy conservation law \eqref{Energy0}.

\begin{proposition}[\cite{MR678151}]\label{henry}
There exist $C>0$ and   $\ve_0>0$  such that, for all $\ve \in (0,\ve_0)$ and for any odd $\varphi^{in}\in H^1\times L^2$, if
$
\|\varphi^{in}\|_{{H^1}\times{L^2}}< \ve,
$
the solution $\varphi$ of (\ref{eqvarphi}) with initial data $\varphi(0)=\varphi^{in}$ is global in the energy space and  satisfies  
\begin{equation}
\label{asympt stab 1}
\forall t\in \RR,\quad
\|\varphi(t)\|_{H^1\times L^2}< C \|\varphi^{in}\|_{H^1\times L^2}.
\end{equation}
\end{proposition}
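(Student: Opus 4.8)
The plan is to run the classical Lyapunov argument built on the conserved functional $\mathcal E$ from \eqref{envarphi}. Two ingredients are needed: (i) coercivity of the quadratic form $w\mapsto \langle \mathcal L w,w\rangle$ on the subspace of \emph{odd} $H^1$ functions, and (ii) the one-dimensional Sobolev embedding $H^1(\RR)\hookrightarrow L^\infty(\RR)$, which renders the cubic and quartic terms of $\mathcal E$ strictly higher order than its quadratic part near $0$. Granting these, conservation of $\mathcal E$ together with a continuity (bootstrap) argument gives a uniform bound on $\|\varphi(t)\|_{H^1\times L^2}$ on the maximal interval of existence, and the blow-up alternative from the local Cauchy theory then upgrades this to global existence.

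For the coercivity I would argue as follows. Since \eqref{eqvarphi} preserves oddness, $\varphi_1(t)$ remains odd on the maximal interval of existence. The spectral facts recalled above give $\mathrm{spec}\,\mathcal L=\{0,\tfrac32\}\cup[2,\infty)$, with the kernel spanned by the \emph{even} function $Y_0\propto H'$ and the eigenvalue $\tfrac32$ carried by the \emph{odd} eigenfunction $Y_1$. Hence on the odd subspace the bottom of the spectrum of $\mathcal L$ is $\tfrac32$, so $\langle \mathcal L w,w\rangle\ge \tfrac32\|w\|_{L^2}^2$ for all odd $w\in H^1(\RR)$; combining this with the elementary bound $\langle \mathcal L w,w\rangle = \|w'\|_{L^2}^2+\int(2-3\sech^2)w^2 \ge \|w'\|_{L^2}^2-3\|w\|_{L^2}^2$ (take a convex combination of the two) produces a constant $c_0>0$ with $\langle \mathcal L w,w\rangle \ge c_0\|w\|_{H^1}^2$ for every odd $w$. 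This step — the fact that the translation zero mode of $\mathcal L$ is invisible to odd perturbations — is the only place where oddness is essential and is the conceptual heart of the statement; the spectral input itself is classical (P\"oschl--Teller potential) and would just be quoted.

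It then remains to bound the nonlinear terms by $\big|2\int H\varphi_1^3+\tfrac12\int\varphi_1^4\big|\lesssim \|\varphi_1\|_{H^1}^3+\|\varphi_1\|_{H^1}^4$ using $\|H\|_{L^\infty}=1$ and $H^1\hookrightarrow L^\infty$, so that $\mathcal E(\varphi(t))\ge c_0\|\varphi(t)\|_{H^1\times L^2}^2-C\|\varphi(t)\|_{H^1\times L^2}^3$ whenever $\|\varphi(t)\|_{H^1\times L^2}\le 1$, while $\mathcal E(\varphi^{in})\le C\|\varphi^{in}\|_{H^1\times L^2}^2$ directly from the same bounds (the quadratic part of $\mathcal E$ is dominated by $2\|\varphi^{in}\|_{H^1\times L^2}^2$ since $|2-3\sech^2|\le 2$). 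Writing $N(t)=\|\varphi(t)\|_{H^1\times L^2}$, conservation of $\mathcal E$ gives $c_0 N(t)^2-CN(t)^3\le CN(0)^2$ on the set where $N(t)\le c_0/(2C)$; there this forces $N(t)^2\le (2C/c_0)N(0)^2$. Choosing $\ve_0$ so small that $(2C/c_0)\ve_0^2<(c_0/(2C))^2$, the set $\{t:\ N(t)\le c_0/(2C)\}$ is open, closed and nonempty in the maximal interval (using continuity of $t\mapsto N(t)$ from local well-posedness), hence equals the whole interval, yielding $N(t)\le \sqrt{2C/c_0}\,N(0)$ everywhere. This is \eqref{asympt stab 1} with $C=\sqrt{2C/c_0}$, and the uniform a priori bound makes the solution global. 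The main obstacle is genuinely only the coercivity/spectral step above; everything else is routine.
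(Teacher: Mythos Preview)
Your proof is correct and follows essentially the same approach as the paper: both establish coercivity of $\langle\mathcal L\cdot,\cdot\rangle$ on the odd subspace by combining the spectral lower bound $\langle\mathcal L w,w\rangle\ge\tfrac32\|w\|_{L^2}^2$ (available because $Y_0$ is even) with the pointwise form of $\mathcal L$ to upgrade to an $H^1$ bound, then use Sobolev embedding to treat the cubic/quartic terms as higher order and close by energy conservation plus continuity. The paper carries out the convex-combination step with specific coefficients to obtain the explicit constant $c_0=\tfrac37$, but the structure of the argument is identical to yours.
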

\begin{proof}[Proof of Proposition \ref{henry}]
We begin with the following simple result.
\begin{claim}\label{onL} 
If $\varphi_1\in H^1(\R)$ satisfies $\langle \varphi_1,Y_0\rangle =0$, then
\[
\langle \mathcal L \varphi_1,\varphi_1 \rangle \geq \frac 37 \|\varphi_1\|_{H^1}^2.
\]
\end{claim}
\begin{proof}[Proof of Claim \ref{onL}]
By the spectral properties of $\mathcal L$ and the spectral theorem, we have immediately
\[
\langle \mathcal L \varphi_1,\varphi_1 \rangle \geq \frac 32 \|\varphi_1\|_{L^2}^2.
\]
Since ${\sech}^2\big(\frac{x}{\sqrt{2}}\big)\leq 1$,
\begin{align*}
\langle \mathcal L \varphi_1,\varphi_1 \rangle& = 
\int (\partial_x \varphi_1)^2 + 2 \int \varphi_1^2 - 3 \int {\rm sech}^2\left(\frac{x}{\sqrt 2}\right) \varphi_1^2\\
& \geq  \int (\partial_x \varphi_1)^2 +  \frac 57  \int \varphi_1^2 - \frac{12}{7} \int {\rm sech}^2\left(\frac{x}{\sqrt 2}\right) \varphi_1^2\\
& \geq \frac 37 \int (\partial_x \varphi_1)^2 -\frac 37  \int \varphi_1^2
+\frac 47 \langle \mathcal L \varphi_1,\varphi_1\rangle  \geq \frac 37 \|\varphi_1\|_{H^1}^2.
\end{align*}
This ends the proof of the Claim. 
\end{proof}
Going back to the proof of \eqref{asympt stab 1}, on the one hand,
\[
\mathcal E(\varphi^{in}) \leq   \|\varphi_2^{in}\|_{L^2}^ 2+2\|\varphi_1^{in}\|_{H^1}^2  + O(\|\varphi_1^{in}\|_{H^1}^3),
\]
and on the other hand,
\[
\mathcal E(\varphi(t)) \geq \frac 3{7}\left(\|\varphi_2(t)\|_{L^2}^ 2+\|\varphi_1(t)\|_{H^1}^2\right) - O(\|\varphi_1(t)\|_{H^1}^3).
\]
Combining these estimates with the energy conservation \eqref{envarphi}, for $\ve>0$ small enough, we get the result.
\end{proof}

\subsection{ODE arguments and the Fermi-Golden rule}
This section concerns the resolution of the equation $(-\mathcal L +4\mu^2) G  =   F$, where $\mu^2 =\frac32$.
This will be crucial in choosing the function $g$ in the definition of $\mathcal J$ in (\ref{def J}).
\begin{lemma}\label{le:ODE}
{\rm (i)} Let $F\in L^1(\RR)\cap C^1(\RR)$ be a real-valued function. The function $G\in L^\infty(\RR)\cap C^2(\RR)$ defined by
\begin{equation}\label{var par 7}
G(x)=\frac 1{12} {\rm Im} \left( k(x) \int_{-\infty}^x \bar k F + \bar k(x) \int_x^{+\infty} kF\right), \qquad 
\end{equation}
where
\begin{equation}\label{var par 5}
k(x)=e^{i2x} \left(1+\frac 12 \sech^2\left(\frac{x}{\sqrt{2}}\right)+i\sqrt{2}\tanh\left(\frac{x}{\sqrt{2}}\right)\right),
\end{equation}
satisfies
\begin{equation}
\label{var par 4}
(-\mathcal L +4\mu^2) G  =   F.
\end{equation}
{\rm (ii)} Assume in addition that $F\in \mathcal S(\RR)$. Then,
\begin{equation}\label{var par 3}
G\in \mathcal S(\RR)\quad \iff \quad \langle k ,  F\rangle=0.
\end{equation}
\end{lemma}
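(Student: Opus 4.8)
The plan is to construct $G$ explicitly by the variation of parameters formula for the second-order ODE $(-\mathcal{L}+4\mu^2)G = F$, i.e.
\[
G'' - \left(4\mu^2 + 2 - 3\sech^2\!\left(\tfrac{x}{\sqrt2}\right)\right) G = -F,
\]
which, since $4\mu^2 = 6$ and $4\mu^2+2 = 8 = (2\sqrt2)^2$, has as its two homogeneous solutions a conjugate pair $k$ and $\bar k$ with asymptotic behaviour $e^{\pm i 2x}$ at $\pm\infty$ modulo the Pöschl--Teller reflectionless structure. First I would verify directly that $k$ in \eqref{var par 5} solves the homogeneous equation: this is a routine (if tedious) substitution using $\frac{d}{dx}\tanh(\tfrac{x}{\sqrt2}) = \tfrac{1}{\sqrt2}\sech^2(\tfrac{x}{\sqrt2})$ and $\frac{d}{dx}\sech^2(\tfrac{x}{\sqrt2}) = -\sqrt2\,\sech^2(\tfrac{x}{\sqrt2})\tanh(\tfrac{x}{\sqrt2})$; the real and imaginary parts give two real linearly independent solutions. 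Then I would compute the Wronskian $W(k,\bar k) = k\bar k' - k'\bar k$, which is constant (since there is no first-order term) and purely imaginary; a computation of the leading term at $x\to+\infty$, where $k \approx e^{2ix}(1 + i\sqrt2)$, gives $W = -2i\,\mathrm{Im}(\bar k k') $, and one finds $|W|$ equals the constant $24$ matching the $\tfrac{1}{12}$ prefactor (the $\tfrac{1}{12}$ is $\tfrac{1}{|W|}$ up to the factor $2$ absorbed in taking $\mathrm{Im}$). With $W$ in hand, the standard variation of parameters formula
\[
G(x) = \frac{1}{W}\left( \bar k(x)\int_{-\infty}^x k F + k(x)\int_x^{+\infty}\bar k F\right)
\]
rewritten using $\mathrm{Im}$ gives exactly \eqref{var par 7}, and differentiating twice confirms \eqref{var par 4}. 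Boundedness of $G$ and $G\in C^2$ follow from $F\in L^1\cap C^1$ and $|k|$ bounded.

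For part (ii), the key observation is that as $x\to+\infty$ the term $\int_x^{+\infty} k F \to 0$ (Schwartz $F$), while $\int_{-\infty}^x \bar k F \to \langle \bar k, F\rangle = \overline{\langle k, F\rangle}$ (here $F$ real). Hence
\[
G(x) = \frac{1}{12}\,\mathrm{Im}\!\left( k(x)\,\overline{\langle k,F\rangle}\right) + \text{(rapidly decaying)} \quad\text{as } x\to+\infty,
\]
and since $k(x) = e^{2ix}(1+i\sqrt2) + O(e^{-x\sqrt2})$ the leading term is a genuine bounded oscillation that fails to decay unless $\langle k, F\rangle = 0$. Conversely, if $\langle k,F\rangle = 0$, then $\int_{-\infty}^x \bar k F = -\int_x^{+\infty}\bar k F$, which together with $\int_x^{+\infty}(\cdot)$ being $O(x^{-N})$ for every $N$ (integration by parts, using that $F$ and all derivatives are Schwartz and $k$ has bounded derivatives of all orders) shows $G$ and all its derivatives decay faster than any polynomial at $+\infty$; the same argument at $-\infty$ uses the conjugate relation. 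A small point to check is that the rapidly-decaying remainder really is Schwartz: one writes $\int_x^{+\infty} kF = $ boundary terms plus $\int_x^{+\infty} (kF)'$ and iterates, each step gaining decay from $F\in\mathcal S$; the oscillatory factor $e^{2ix}$ does not spoil this because we only need absolute decay, which $F$ already provides.

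The main obstacle I anticipate is purely computational rather than conceptual: verifying that the specific $k$ in \eqref{var par 5} solves the homogeneous equation, and pinning down the exact constant in the Wronskian so that the prefactor $\tfrac{1}{12}$ is correct. The algebra involves several cancellations among terms with $\sech^2$, $\sech^2\tanh$, and $\sech^4$ weights (using $\tanh^2 = 1-\sech^2$), and one must be careful with the factors of $\sqrt2$ coming from the argument $x/\sqrt2$. Everything else — the variation of parameters structure, the $L^\infty\cap C^2$ regularity, and the Schwartz dichotomy governed by $\langle k,F\rangle$ — is standard once $k$ and $W$ are correctly identified. I would organize the write-up as: (1) $k$ solves the homogeneous ODE (direct check); (2) compute $W(k,\bar k)$ (asymptotics at $+\infty$); (3) variation of parameters $\Rightarrow$ \eqref{var par 7}–\eqref{var par 4}; (4) asymptotic expansion of $G$ at $\pm\infty$ $\Rightarrow$ the equivalence \eqref{var par 3}.
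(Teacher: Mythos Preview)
Your approach is essentially identical to the paper's: verify directly that $k$ solves $(-\mathcal L+6)k=0$, compute the constant Wronskian $W(k,\bar k)$, apply variation of parameters, and read off the Schwartz dichotomy from the asymptotics of $G$ at $\pm\infty$. One small correction: the Wronskian is $W(k,\bar k)=-12i$ (from $k\sim(1+i\sqrt2)e^{2ix}$ one gets $W=-4i\,|1+i\sqrt2|^2=-12i$), so $-\mathrm{Re}\bigl[\tfrac{1}{W}(\cdots)\bigr]=\tfrac{1}{12}\,\mathrm{Im}(\cdots)$ with no extra factor of $2$; also note your variation-of-parameters formula has $k$ and $\bar k$ exchanged relative to \eqref{var par 7}, which is harmless after taking the imaginary part since $F$ is real, but you should align it with the stated formula.
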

\begin{remark}
Since $k(-x)=\bar k(x)$, if $F$ is odd then $G$ defined by \eqref{var par 7} is also odd and the orthogonality condition in \eqref{var par 3} reduces to
\begin{equation}\label{var par 2}
\langle{\rm Im}( k ) ,F \rangle=0.
\end{equation}
\end{remark}
\begin{remark}\label{FGrule} Note that  $(-\mathcal L +4\mu^2)k = (-\mathcal L + 6) k =0$ (see the proof of Lemma \ref{le:ODE}).
Since $(-\mathcal L +\frac 32)Y_1=0$, we have
$\langle k, Y_1\rangle  = 0$. Thus, for $f$ defined in \eqref{deff},
\[
\langle k, f\rangle = \frac 32 \langle k , H Y_1^2\rangle=\frac 32 i \langle {\rm Im}(k) , H Y_1^2\rangle.
\]
The fact that 
\begin{equation}\label{fermi1}
\langle {\rm Im}(k) , H Y_1^2\rangle \neq 0,
\end{equation}
can be easily checked by numerical integration (we find  $\langle {\rm Im}(k) , H Y_1^2\rangle 
 \approx -0.222$). This is a kind of non resonance  condition and it  was identified by Sigal \cite{MR1218303} in the context of nonlinear wave and  Schr\"odingier equations and by Soffer and Weinstein \cite{MR1681113} in the context of a nonlinear Klein-Gordon equation in three dimensions. This condition is  a nonlinear version of the  \emph{Fermi Golden Rule} (from now on we will refer to it as such) whose origin is in  quantum mechanics \cite{simon_1}, \cite[p. 51]{reed-simon_4}. As it was pointed out in \cite{MR1681113}, this nonzero condition guarantees that the internal oscillations are coupled to  radiation and, as a consequence of this fact, the energy of the system eventually radiates away from the kink neighborhood, making it asymptotically  stable. This behavior is in deep contrast with the (integrable) sine-Gordon equation, for which the Fermi Golden Rule is definitely not satisfied (simply because $0$ is the only discrete eigenvalue for the associated linear operator), and even worse, explicit periodic solutions (wobblers, see \eqref{wobbler}) do persist.

\medskip

In the present paper, the Fermi Golden Rule \eqref{fermi1} is the key fact  which forbids  the existence of a solution $g$ of $(-\mathcal L + 4\mu^2) g = f$ {in the energy space}. As we will see, in our particular setting, and for purely algebraical reasons, we will use a modified version of (\ref{fermi1}), which reads 
\begin{equation}
\label{fermi 2}
\left\langle {\rm Im}(k) , \left(H Y_1^2-Y_1\langle H Y_1^2, Y_1\rangle\right)\sech^2 \Big(\frac{x}{8\sqrt{2}}\Big)\right\rangle\approx -0.218. 
\end{equation}

This condition is better adapted to the use of weighted Sobolev spaces which we work with, see Section \ref{coer D} and \eqref{def a}.   
\end{remark}

\begin{proof}[Proof of Lemma \ref{le:ODE}] (i) The explicit expression of $k$ in \eqref{var par 5} was found by Segur \cite{MR708660}. We easily check by direct computation that
\[
(-\mathcal L +4\mu^2)k = (-\mathcal L + 6) k =0.
\]
Indeed, let $m(x):=1+\frac 12 \sech^2\left(\frac{x}{\sqrt{2}}\right)+i\sqrt{2}\tanh\left(\frac{x}{\sqrt{2}}\right)$.
Then,
\begin{align*}
m'(x)&=-\frac{\sqrt 2}2\tanh\left(\frac{x}{\sqrt{2}}\right)\sech^2\left(\frac{x}{\sqrt{2}}\right)+ i \sech^2\left(\frac{x}{\sqrt{2}}\right),\\
m''(x)&=-\frac 32\sech^4\left(\frac{x}{\sqrt{2}}\right)+\sech^2\left(\frac{x}{\sqrt{2}}\right)-i\sqrt 2 \tanh\left(\frac{x}{\sqrt{2}}\right)\sech^2\left(\frac{x}{\sqrt{2}}\right),
\end{align*}
and thus
\begin{align*}
(-\mathcal L + 6) k & = [(\partial_x^2 + 4)e^{i2x}] m + 4i e^{i2x} m' + e^{i2x} m''+3\sech^2\left(\frac{x}{\sqrt{2}}\right)e^{i2x} m\\
& = e^{i2x} \left( 4im'+m''+3\sech^2\left(\frac{x}{\sqrt{2}}\right)m\right)=0.
\end{align*}
Since $k(x)\sim (1+i\sqrt{2}) e^{i2x}$ as $x\sim +\infty$, the functions $k$ and $\bar k$ form a set of independent solutions of 
$(-\mathcal L + 6) G=0$ (in fact, up to  multiplicative constants $k$ and $\bar k$ are the so-called Jost functions).
Therefore, for $F\in L^1(\RR)\cap C^1(\RR)$, the following real-valued function $G$ 
\[
G(x)=-{\rm Re}\left[ \frac 1{W(k,\bar k)} \left( k(x) \int_{-\infty}^x \bar k F + \bar k(x) \int_x^{+\infty} kF\right)\right],
\]
where $W(k,\bar k)=k\bar k'-k' \bar k$ is the Wronskian of $k$ and $\bar k$, solves $(-\mathcal L + 6) G =F$.
Since $W(k,\bar k) = -12 i,$ 
we obtain \eqref{var par 7}.

\medskip

\noindent
(ii) 
By the definition of $G$ in \eqref{var par 7}, we have the following asymptotics at $\pm \infty$:
\[
G(x) \sim \frac 1{12} {\rm Im}\left( k(x) \langle \bar k, F \rangle \right) \hbox{ as $x\to +\infty$},
\quad
G(x) \sim \frac 1{12} {\rm Im}\left( \bar k(x) \langle  k, F \rangle \right) \hbox{ as $x\to -\infty$}.
\]
Thus, $\lim_{x\to \pm \infty} G=0$ if and only if $\langle \bar k, F \rangle=0$. Moreover, if  $\langle \bar k, F \rangle=0$  and $F\in \mathcal S(\RR)$, it follows directly from \eqref{var par 7} that $G \in \mathcal S(\RR)$.
\end{proof}

\bigskip

\section{Virial type arguments}\label{VIRIAL}

\medskip

Recall the set of coupled equations (\ref{eqofab}) and (\ref{eqofv}), which can be recast as a mixed system in $(v_1,v_2,\alpha,\beta)$:
\begin{equation}
\label{syd}\left\{
\begin{aligned}
 \dot v_1 & =v_2 + F_1,\\
\dot v_2 & = -\mathcal L v_1 -  \alpha f + F_2, \\
 \dot \alpha &= 2 \mu \beta + F_\alpha, \\
 \dot \beta   & = - 2 \mu \alpha+ F_\beta.
\end{aligned}\right.
\end{equation}
where $\mathcal L = - \partial_{x}^2 + 2 - 3(1-H^2)$, $\mu= \sqrt{3/2}$, and $f$ was introduced in \eqref{deff}. For the moment, we will not make explicit the nonlinear error terms $F_1,$ $F_2$, $F_\alpha$ and $F_\beta$, which will be considered in detail in  Section \ref{ERROR}.

\subsection{Virial type identities}
For a smooth and bounded  function $\psi$ to be chosen later, let
\begin{equation}\label{defI}
\mathcal I  := \int \psi (\partial_x v_1) v_2 + \frac 12 \int \psi' v_1 v_2 = \int \left(\psi \partial_x v_1 + \frac 12 \psi' v_1\right) v_2.
\end{equation}
First, using \eqref{syd} and integrating by parts,
\begin{equation}\label{Ione}
\begin{aligned}
\frac{d}{dt} \int \psi  (\partial_x v_1) v_2  & = \int \psi  (\partial_x \dot v_1)  v_2 +  \int \psi  (\partial_x v_1) \dot v_2 \\
& =  \int \psi  (\partial_x v_2) v_2 +  \int \psi  (\partial_x v_1) (\partial_x^2 v_1-2v_1+3(1-H^2)v_1) -   \alpha \int \psi(\partial_x v_1) f   \\
&\qquad + \int \psi \left((\partial_x F_1)  v_2   +   (\partial_x v_1) F_{2}\right)\\
& = -\frac 12 \int \psi' \left( v_2^2 + (\partial_x v_1)^2 - 2v_1^2\right) - \frac32 \int  \left(\psi (1-H^2)\right)' v_1^2
+  \alpha  \int  v_1  (\psi f)'   \\ 
& \qquad + \int \psi \left((\partial_x F_1)  v_2   +   (\partial_xv_1) F_{2}\right).
\end{aligned}
\end{equation}
Second, 
\begin{equation}\label{Itwo}
\begin{aligned}
 \frac{d}{dt} \int \psi' v_1v_2 & =   \int \psi' \dot v_1 v_2 +  \int \psi'  v_1 \dot v_2 \\
& =  \int \psi'  v_2^2 + \int \psi' v_1 \left(\partial_x^2 v_1 -2v_1+3(1-H^2)v_1\right) -   \alpha \int \psi' v_1 f
 + \int \psi' \left( F_1 v_2 +  v_1 F_2\right)\\
& =  \int \psi'\left(v_2^2 -(\partial_x v_1)^2 -2 v_1^2\right) + \frac 12 \int \psi''' v_1^2 
+ 3 \int \psi' (1-H^2)v_1^2 -   \alpha \int \psi'v_1 f \\ & + \int \psi' \left( F_1 v_2 +  v_1 F_2\right).
\end{aligned}
\end{equation}
Therefore,
\begin{equation}
\label{moondawn}
\begin{aligned}
\frac d{dt} {\mathcal I} &=- \mathcal B(v_1) +  \alpha \int v_1\left(\psi f'+\frac 12 \psi' f\right)  
+ \int  v_2  \left(\psi \partial_x F_1 +\frac 12 \psi' F_1\right) 
- \int v_1 \left( \psi \partial_x F_2 +\frac 12 \psi' F_2\right),
\end{aligned}
\end{equation}
where
\be\label{B_0}
\mathcal B(v_1):= \int \psi' (\partial_x v_1)^2 -\frac{1}{4}\int\psi'''v_1^2 - 3 \int \psi  HH' v_1^2 .
\ee
For a smooth function $g$  to be chosen later, let
\begin{equation}\label{defJ}
\mathcal  J :=\alpha  \int v_2 g - 2\mu \beta \int v_1 g,
\end{equation}
Then, using \eqref{syd}
\begin{equation}\label{moondawn 2}\begin{aligned}
\frac d{dt}{\mathcal J} & = \dot \alpha \int v_2 g + \alpha \int \dot v_2 g - 2 \mu \dot \beta \int v_1 g - 2 \mu \beta \int \dot v_1 g\\
& = \alpha \int \left( - \mathcal L g+ 4 \mu^2 g\right) v_1 -   \alpha^2 \int fg \\
& \qquad + F_\alpha \int v_2 g - 2 \mu F_\beta \int v_1 g + \alpha \int F_2 g - 2 \mu \beta \int F_1 g.
\end{aligned}\end{equation}

Adding (\ref{moondawn}) and (\ref{moondawn 2}), we obtain
\begin{equation}
\label{IpJ}
\frac{d}{dt}\left(\mathcal I + \mathcal J\right)=- \mathcal D(v_1,\alpha)+\mathcal R_{\mathcal D},
\end{equation}
where 
\begin{equation}
\label{def B}
\mathcal D(v_1,\alpha) :=\mathcal B(v_1)-\alpha \int v_1 \left( \psi f'+\frac 12\psi' f-\mathcal L g+4\mu^2 g\right)+ \alpha^2\int fg,
\end{equation}
and the rest term is
\begin{equation}
\label{virial error}
\mathcal R_{\mathcal D} :=
 \int  g\left(  \alpha F_2   - 2 \mu \beta   F_1 \right)
+ \int  v_2  \left(\psi \partial_x F_1 +\frac 12 \psi' F_1 + g F_\alpha  \right) 
- \int v_1 \left( \psi \partial_x F_2 +\frac 12 \psi' F_2 + 2 \mu g F_\beta  \right) .
\end{equation}

\subsection{Coercivity of the bilinear form $\mathcal B$}
Now we choose a specific function $\psi$ and we consider the question of the coercivity of the bilinear form $\mathcal B$ given in \eqref{B_0}. 
Let $\lambda>1$ to be chosen (we anticipate that in the sequel we set $\lambda=8$) and in the definition of $\mathcal I$ let 
\begin{equation}\label{defpsi}
\psi(x) : = \lambda\sqrt{2} H\Big(\frac{x}{\lambda}\Big)= \lambda\sqrt{2}\tanh\Big(\frac{x}{\lambda \sqrt{2}}\Big);
\quad \zeta(x) :=\sqrt{\psi'(x)}=\sech \Big(\frac{x}{\lambda\sqrt{2}}\Big).
\end{equation}
Note that $\zeta>0$ everywhere. Let $w$ be the following auxiliary function
\be\label{def w}
w:=\zeta v_1.
\ee
First, note that by integration by parts,
\begin{align*}
\int w_x^2 
&= \int (\zeta \partial_x v_1 +\zeta' v_1  )^2
  = \int \psi' (\partial_x v_1)^2 + 2 \int \zeta \zeta' v_1 (\partial_x v_1) + \int (\zeta')^2 v_1^2 \\
 & = \int \psi' (\partial_x v_1)^2  - \int \zeta \zeta'' v_1^2 \\
 & = \int \psi' (\partial_x v_1)^2  - \int\frac{\zeta''}{\zeta} w^2.
\end{align*}
Thus,
\begin{equation}\label{idgrad}
\int \psi' (\partial_x v_1)^2 = \int w_x^2  +  \int\frac{\zeta''}{\zeta} w^2.
\end{equation}
Second,
$$
\int\psi'''v_1^2 = \int \frac{(\zeta^2)''}{\zeta^2} w^2
= 2 \int \left(\frac {\zeta''}{\zeta}+ \frac{(\zeta')^2}{\zeta^2} \right) w^2.
$$
Therefore,
\begin{equation}\label{change}\begin{aligned}
\mathcal B(v_1) &= \int \psi' (\partial_x v_1)^2 -\frac{1}{4}\int\psi'''v_1^2 - 3 \int \psi  HH' v_1^2 \\
& = \int w_x^2 + \frac 12  \int \left( \frac {\zeta''}{\zeta} -\frac{(\zeta')^2}{\zeta^2} \right) w^2
- 3 \int \frac{\psi}{\psi'} HH' w^2.
\end{aligned}\end{equation}
Set
\[
{\mathcal B}^\sharp(w):= \int \left(w_x^2 - V w^2\right),\quad \hbox{where} \quad
V:= - \frac 12    \left( \frac {\zeta''}{\zeta} -\frac{(\zeta')^2}{\zeta^2} \right)+ 3  \frac{\psi}{\psi'} HH',
\]
so that 
\be\label{B sharp}
 {\mathcal B}^\sharp(w)=\mathcal B(v_1).
\ee
Note that by \eqref{defpsi} and direct computations,
\[
\frac{\zeta''}{\zeta}= \frac1{2\lambda^2} (1-2\zeta^2),\quad
\frac{(\zeta')^2}{\zeta^2}= \frac1{2\lambda^2} (1-\zeta^2),
\quad
\frac {\zeta''}{\zeta}-\frac{(\zeta')^2}{\zeta^2}=-\frac{\zeta^2}{2\lambda^2}
=-\frac{1}{2\lambda^2}\sech^2\big(\frac{x}{\lambda\sqrt{2}}\big).
\]
Moreover,
\[
\frac{\psi}{\psi'} = \lambda\sqrt{2}\tanh\Big(\frac{x}{\lambda\sqrt{2}}\Big)\cosh^2\Big(\frac{x}{\lambda\sqrt{2}}\Big).
\]
Therefore, ${\mathcal B}^\sharp(w)= \int \left(w_x^2 - V w^2\right)$, where
\be\label{V}
V(x)=
\frac{1}{4\lambda^2}\sech^2\Big(\frac{x}{\lambda\sqrt{2}}\Big)
+3\lambda\tanh\Big(\frac{x}{\lambda\sqrt{2}}\Big)\cosh^2\Big(\frac{x}{\lambda\sqrt{2}}\Big)\tanh\Big(\frac{x}{\sqrt{2}}\Big)\sech^2\Big(\frac{x}{\sqrt{2}}\Big).
\ee
Recall $Y_1$ from \eqref{phi_1}. Let 
\be\label{Z_1}
Z_1 :=Y_1\cosh\Big(\frac{x}{\lambda\sqrt{2}}\Big) \quad \hbox{so that}\quad
\langle v_1,Y_1\rangle=0 \ \iff \ \langle w,Z_1\rangle=0.
\ee
Note that $Z_1$ is odd. In the following, we claim the following coercivity result for the  quadratic form $\mathcal B^\sharp$.

\begin{lemma}\label{le:posVir}
Let $\lambda = 8$. There exists $\kappa>0$ such that,
for any odd function $w\in H^1$, 
\begin{equation}\label{posVir1}
 \langle w,Z_1 \rangle   =0\quad
  \implies \quad
    {\mathcal B}^\sharp(w)
  \geq \kappa \int w_x^2.
\end{equation}
\end{lemma}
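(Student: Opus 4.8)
The plan is to prove the coercivity estimate \eqref{posVir1} for the Schr\"odinger-type quadratic form $\mathcal B^\sharp(w) = \int (w_x^2 - Vw^2)$ by a combination of spectral analysis of the operator $-\partial_x^2 - V$ restricted to odd functions, plus an explicit numerical verification of the sign of the relevant bottom eigenvalue under the orthogonality constraint $\langle w, Z_1\rangle = 0$.

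First I would set $\lambda = 8$ and examine the potential $V$ in \eqref{V}. The key structural observation is that $V$ is a smooth, even, bounded, rapidly decaying potential (its second term decays because $\tanh(x/\sqrt2)\sech^2(x/\sqrt2)$ decays exponentially and beats the $\cosh^2(x/(8\sqrt2))$ growth), so $-\partial_x^2 - V$ is a well-defined self-adjoint operator on $L^2(\R)$ with essential spectrum $[0,\infty)$ and at most finitely many negative eigenvalues. Restricting to the odd subspace, I would argue that $\mathcal B^\sharp(w) \geq 0$ fails in general — there is (at most) one negative eigenvalue in the odd sector — but that after imposing the single orthogonality condition $\langle w, Z_1\rangle = 0$ one recovers strict positivity, in fact coercivity $\mathcal B^\sharp(w) \geq \kappa \int w_x^2$. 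The passage from $\mathcal B^\sharp(w) \geq \kappa' \|w\|_{L^2}^2$ (positivity of the form modulo the orthogonality direction, with a spectral gap) to the gradient-coercive form $\mathcal B^\sharp(w)\geq \kappa\int w_x^2$ is routine: one writes $\mathcal B^\sharp(w) = \theta\int w_x^2 + (1-\theta)\int w_x^2 - \int Vw^2$, uses $\int w_x^2 \geq c\|w\|_{L^2}^2$ is false on all of $H^1$ but $\int Vw^2 \lesssim \|w\|_{L^2}^2$ plus the already-established $\mathcal B^\sharp(w)\gtrsim\|w\|_{L^2}^2$ on the constrained subspace, and interpolates; more precisely, for $\theta$ small, $\mathcal B^\sharp(w) \geq \theta\int w_x^2 + \big((1-\theta)\int w_x^2 - \int Vw^2\big)$ and the bracket is $\geq (1-\theta)\mathcal B^\sharp(w) - \theta\int Vw^2 \geq \frac{1-\theta}{2}\mathcal B^\sharp(w)$ once $\theta\|V\|_\infty$ is absorbed using the $L^2$-coercivity. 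I would present this reduction cleanly and focus the real work on the $L^2$-level positivity.

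For the $L^2$-level positivity on the constrained odd subspace, the strategy is the standard one for this kind of problem (as in Martel--Merle and Chang--Gustafson--Nakanishi--Tsai type arguments): it suffices to show that the odd sector of $-\partial_x^2 - V$ has exactly one negative eigenvalue $-\nu_1 < 0$ with eigenfunction $\chi_1$, no zero eigenvalue in the odd sector, and that $\langle \chi_1, Z_1\rangle \neq 0$; then on $\{w : \langle w, Z_1\rangle = 0\}$ the form is bounded below by a positive multiple of $\|w\|_{L^2}^2$ by the min-max/Courant argument (the constraint is transversal to the single bad direction). The counting of negative eigenvalues and the non-degeneracy $\langle \chi_1, Z_1\rangle \neq 0$ I would establish by explicit numerical computation, exactly in the spirit of the Fermi Golden Rule verification \eqref{fermi1}--\eqref{fermi 2} already used in the paper — discretize $-\partial_x^2 - V$ on a large interval, check that it has precisely one negative Dirichlet eigenvalue in the odd sector with a definite gap to $0$, and evaluate the inner product with $Z_1$. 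The specific choice $\lambda = 8$ is presumably dictated by making this numerics robust (the coefficient $\tfrac{1}{4\lambda^2}$ is small while $3\lambda$ is not too large, keeping $V$ in a favorable range).

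The main obstacle is the spectral/numerical step: proving rigorously that the odd part of $-\partial_x^2 - V$ has \emph{exactly} one negative eigenvalue and no odd zero-energy resonance or eigenvalue, and that it is not orthogonal to $Z_1$. This is not something one can do by a clean a priori argument since $V$ is a genuinely complicated explicit function with no obvious monotonicity or factorization; one is forced into either a careful ODE shooting/Sturm-oscillation analysis of the zeros of solutions of $-\chi'' - V\chi = 0$ and $-\chi'' - V\chi = -\nu\chi$, or a validated numerical computation. I would handle it by the ODE route where possible — counting sign changes of the odd solution of the zero-energy equation to bound the number of negative eigenvalues via Sturm oscillation theory — supplemented by the numerical value of $\langle \chi_1, Z_1\rangle$ and of the spectral gap. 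Everything else — the decay and smoothness of $V$, the finiteness of the discrete spectrum, the reduction from $L^2$-coercivity to $H^1$-coercivity, and the translation of the constraint $\langle v_1, Y_1\rangle = 0$ into $\langle w, Z_1\rangle = 0$ via \eqref{Z_1} — is standard and I would dispatch it briskly.
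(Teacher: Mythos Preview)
Your overall strategy---spectral analysis of $\mathcal L^\sharp=-\partial_x^2-V$ on the odd sector, one negative direction removed by one orthogonality condition---is the right shape, and is essentially what the paper's Remark~\ref{alter} sketches as an alternative route. But the execution has two genuine gaps.

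First, the condition $\langle\chi_1,Z_1\rangle\neq 0$ is \emph{not} sufficient for nonnegativity of $\mathcal B^\sharp$ on $\{w\ \hbox{odd}:\langle w,Z_1\rangle=0\}$. Writing $w=a\chi_1+w^\perp$ with $w^\perp\perp\chi_1$, the constraint determines $a$ in terms of $\langle w^\perp,Z_1\rangle$, but $\mathcal B^\sharp(w)=-\nu_1 a^2+\langle\mathcal L^\sharp w^\perp,w^\perp\rangle$ can still be negative: a $w^\perp$ near the bottom of the continuous spectrum makes the second term arbitrarily small while $a$ need not vanish. The correct criterion (this is the Merle--Rapha\"el type argument the paper cites) is that the solution $Z_1^\sharp$ of $\mathcal L^\sharp Z_1^\sharp=Z_1$ satisfies $\langle Z_1,Z_1^\sharp\rangle<0$; mere non-orthogonality of $Z_1$ to the eigenfunction is neither necessary nor sufficient.

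Second, your claimed $L^2$-spectral gap does not exist: since $V$ decays, the essential spectrum of $-\partial_x^2-V$ is $[0,\infty)$ by Weyl's theorem, so even after the constraint kills the negative direction you can only conclude $\mathcal B^\sharp(w)\geq 0$, never $\mathcal B^\sharp(w)\geq\kappa'\|w\|_{L^2}^2$. Your interpolation to the gradient bound then collapses, because it hinges on absorbing $\theta\|V\|_\infty\|w\|_{L^2}^2$ into a nonexistent $L^2$-coercive term.

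The paper's proof avoids both issues by a different mechanism. It bounds the complicated $V$ pointwise by an explicit P\"oschl--Teller potential, reducing matters to the exactly solvable operator $-\partial_x^2-3\sech^2(x/2)$, whose single odd negative direction $\tilde Y_1$ is known in closed form with eigenvalue $-1$. The gradient coercivity then comes not from any $L^2$-gap but from splitting off a small piece of $\int w_x^2$ and controlling the bad coefficient $\langle\tilde Y_1,w\rangle$ \emph{directly in terms of} $\|w_x\|_{L^2}$: since $\langle w,Z_1\rangle=0$, one has $\langle\tilde Y_1,w\rangle=\langle\tilde Y_1-\nu Z_1,w\rangle=-\langle\Upsilon_\nu,w_x\rangle$ for any primitive $\Upsilon_\nu$ of $\tilde Y_1-\nu Z_1$, and a numerical minimization over $\nu$ yields $\min_\nu\|\Upsilon_\nu\|_{L^2}^2<1/14$, which is exactly the threshold needed. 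This ``Hardy-type'' control of the projection by the gradient, rather than an $L^2$-spectral gap, is the missing idea in your proposal.
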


\begin{proof} 
First, note that 
\begin{equation}
\mathcal B^\sharp(w)=\frac 14 \mathcal B^\sharp_1(w) + \mathcal B^\sharp_2(w), \quad \hbox{where}
\label{decomp pos Vir}
\end{equation}
where
\[
\mathcal B^\sharp_1(w) =\int   w_x^2 - \frac 1{\lambda^2} \sech^2\left(\frac x{\lambda \sqrt{2}}\right);
\]
\[
\mathcal B^\sharp_2(w) = \int  \left(\frac{3}{4} w_x^2-V_2 w^2\right),\quad 
V_2=3\lambda\tanh\Big(\frac{x}{\lambda\sqrt{2}}\Big)\cosh^2\Big(\frac{x}{\lambda\sqrt{2}}\Big)\tanh\Big(\frac{x}{\sqrt{2}}\Big)\sech^2\Big(\frac{x}{\sqrt{2}}\Big).
\]
The idea is to show that $\mathcal B^\sharp_1(w)$ and $\mathcal B^\sharp_2(w)$ are both non negative.

\medskip

{\bf Step 1.} We claim
\begin{claim}\label{cc1}
For all $\lambda_0 >0$, for any odd function $w\in H^1$, 
\begin{equation}\label{cc2}
 \int w_x^2 - \frac{1}{\lambda_0^2}\sech^2\Big(\frac{x}{\lambda_0\sqrt{2}}\Big) w^2 \geq 0.
\end{equation}
\end{claim}
\begin{proof}[Proof of Claim \ref{cc1}]
By a change of variables, we can restrict ourselves to the case $\lambda_0\sqrt{2}=1$.
Note that the classical operator $-\partial_x^{2} - 2 \sech^2(x)$ has a continuous spectrum $[0,+\infty)$ and only one negative eigenvalue $-1$, associated to the even eigenfunction $\sech(x)$ (see Titchmarsh \cite[\S 4.18]{Tit}). Thus, by the spectral theorem, for any odd function  $w\in H^1$,
\[
 \int w_x^2 - 2\sech^2(x) w^2\geq 0.
\]
\end{proof}
\noindent
The rest of the proof is devoted to showing that the second term is nonnegative as well. 

\medskip

{\bf Step 2.} Let $\lambda=8$. We observe the following elementary inequality
\begin{equation}\label{eq:maj}
 \forall x\in \R,\quad
 V_2(x)=  3\lambda\tanh\Big(\frac{x}{\lambda\sqrt{2}}\Big)\cosh^2\Big(\frac{x}{\lambda\sqrt{2}}\Big)\tanh\Big(\frac{x}{\sqrt{2}}\Big)\sech^2\Big(\frac{x}{\sqrt{2}}\Big) 
  < \frac{21}{10}\sech^2\Big(\frac{x}{2}\Big).
\end{equation}
(This is easily checked by numerics.)  In particular, combining \eqref{cc2} with $\la_0=8$ and \eqref{eq:maj} in \eqref{decomp pos Vir}, we have, for any odd function $w$,
\begin{equation}\label{majoration}
\mathcal B^\sharp_2 (w) \geq \int \left(\frac 34  w_x^2 - \frac{21}{10}\sech^2\Big(\frac{x}{2}\Big)  w^2\right).
\end{equation}

\medskip

{\bf Step 3.} Finally, we claim the following coercivity property: there exists $\kappa>0$ such that
\begin{equation}\label{posVir3}
\langle w,Z_1\rangle  =0\quad
\implies \quad
\int \left(\frac 34w_x^2 - {\frac{21}{10}\sech^2\Big(\frac{x}{2}\Big)}  w^2\right)
\geq \kappa \int w_x^2.
\end{equation}
Note that in view of \eqref{majoration},  this statement completes the proof of the lemma.
\medskip

\noindent
{{\it Proof of  \eqref{posVir3}}}.
Let 
\be\label{y_1a}
{\tilde Y_1}(x) := \sqrt{\frac{15}8} \frac{\sinh\left(\frac{x}{2}\right)}{\cosh^3\left(\frac{x}{2}\right)}, \quad 
\langle {\tilde Y_1},{\tilde Y_1} \rangle =1.
\ee
We claim that for any odd function $w$,
\begin{equation}\label{posVir4}
\int \left(w_x^2 -   {3}\sech^2\left(\frac{x}{2}\right)   w^2\right) +\langle {\tilde Y_1}, w\rangle^2  
\geq 0.
\end{equation}
Indeed, the operator 
$
 - \partial_x^2 - {3} \sech^2\left(\frac{x}{2}\right)
$
is a classical operator which has exactly three negative eigenvalues (see Titchmarsh \cite[\S 4.18]{Tit}) 
$
-\frac 94,$ $ -1,$ and $-\frac 14,$ and continuous spectrum $[0,+\infty)$.
The eigenvalues $-\frac 94$ and $-\frac 14$ are associated to the even eigenfunctions 
\[
\sech^{3}\Big(\frac x2 \Big) \quad \hbox{ and } \quad  \sech^{3}\Big(\frac x2 \Big)-\frac 45 \sech \Big(\frac x2\Big),
\]
respectively; on the other hand, $-1$ is associated to the odd eigenfunction ${\tilde Y_1}$ defined in \eqref{y_1a}.

\medskip
For $w$ odd, we use the following (orthogonal) decomposition:  $w = w_1 + \langle   {\tilde Y_1},w\rangle {\tilde Y_1}$, where $\langle w_1, {\tilde Y_1}\rangle=0$.
Then, since ${\tilde Y_1}$ is an eigenfunction of $- \partial_x^2 - {3} \sech^2\left(\frac{x}{2}\right)$ with eigenvalue $-1$, we obtain
\[
\int w_x^2 -  3 \sech^2\left(\frac{x}{2}\right)  w^2
= \int (w_1)_x^2 - 3 \sech^2\left(\frac{x}{2}\right)  w_1^2 - \langle {\tilde Y_1} , w\rangle^2.
\]
Additionally, since $w_1$ is odd and orthogonal to ${\tilde Y_1}$, by the spectral theorem, we obtain
\[
\int (w_1)_x^2 -  3 \sech^2\left(\frac{x}{2}\right)  w_1^2\geq 0,
\]
and \eqref{posVir4} follows.

\medskip

Let us come back to the proof of \eqref{posVir3}. We note that, using \eqref{posVir4},
\begin{align*}
\int \left(\frac 34 w_x^2 -  \frac{21}{10}\sech^2\left(\frac{x}{2}\right)  w^2\right)
& = \frac {1}{20} \int w_x^2 + \frac 7 {10} \int \Big(w_x^2 -   {3}\sech^2\left(\frac{x}{2}\right)  w^2\Big)
\\
& \geq  \frac {1}{20}\left(  \int w_x^2   - 14 \langle {\tilde Y_1}, w\rangle^2\right).
\end{align*}
Now, we estimate $\langle {\tilde Y_1}, w\rangle^2$ using the orthogonality condition $ \langle  w,Z_1\rangle=0$. Let $\nu\in \R$ and
\[
\xi_\nu := {\tilde Y_1} - \nu Z_1 .
\]
Since ${\tilde Y_1}$ and $Z_1$ are odd, there exists an even function $\upsilon_\nu\in \mathcal S(\RR)$ such that $\upsilon_\nu ' = \xi_\nu $.
In particular,
\[
 \langle {\tilde Y_1}, w \rangle= \langle  \xi_\nu, w \rangle= - \langle \upsilon_\nu, w_x\rangle,
\]
and using Cauchy-Schwarz' inequality,
\[
\langle {\tilde Y_1}, w\rangle^2  \leq
\left(\int w_x^2\right) \left(   \min_{\nu\in \RR}  \int \upsilon_\nu^2  \right) .
\]
Note that $\tilde Y_1$ in \eqref{y_1a} and $Z_1$ in \eqref{Z_1} are explicit functions, so that $\xi_\nu$ and $\upsilon_\nu$ are easily computable. A numerical computation gives that
\[
   \min_{\nu\in \RR}\int \upsilon_\nu^2  \approx  0.04 < \frac 1{14} \approx 0.071.
\]
Thus, \eqref{posVir3} and \eqref{posVir1} are proved.
\end{proof}

\begin{remark}\label{alter}
In this proof, we have used numerical computations of elementary integrals.
We present briefly an alternative proof of the fact that \eqref{posVir4} implies \eqref{posVir3} relying more strongly on numerics.
Let
\be\label{L_sharp}
  {\mathcal L}^\sharp:= -\partial_x^2 -V ,
\ee
be the linear operator representing $\mathcal B^\sharp$. From  the arguments of the proof in \cite[Lemma 11]{MR2150386}, there exists
a function $Z_1^\sharp\in L^\infty$  such that $\int ((Z_1^\sharp)_x)^2<\infty$ and $  {\mathcal L}^\sharp Z_1^\sharp=Z_1$.
Using numerical computations, we observe
\begin{equation}\label{YZ}
\langle {\mathcal L}^\sharp Z_1^\sharp , Z_1^\sharp \rangle = \langle Z_1 , Z_1^\sharp \rangle \approx - 2.63<0.
\end{equation}
We find the function $Z_1^\sharp$  by a shooting method  (in particular, we obtain $Z_1'(0)\approx -0.4376$), and then 
$\langle Z_1 , Z_1^\sharp \rangle$ by numerical integration.
Using the arguments of the proof of Lemma 13 in \cite{MR2150386}, \eqref{YZ} and \eqref{posVir4} imply \eqref{posVir3}.
\end{remark}
\begin{remark}
For the  case of general perturbations i.e. with the oddness restriction (see Remark \ref{re:general}), one would need to show coercivity of the bilinear form $\mathcal B^\sharp$ under the assumptions of orthogonality with respect to $Z_0$ and $Z_1$,  
where $Z_0 =Y_0\cosh\Big(\frac{x}{\lambda\sqrt{2}}\Big)$. This property is definitely more delicate to prove since the bilinear form appearing in (\ref{posVir3}) will not be sufficient as a lower bound for $\mathcal B^\sharp(w)$. Indeed, one can show that the index of the associated linear operator $- \partial_x^2 - {3} \sech^2\left(\frac{x}{2}\right)$ is $3$. However, we have numerical evidence that the index of the general operator ${\mathcal L}^\sharp$ (see \eqref{L_sharp}) is $2$, as it should be. 
\end{remark}

\begin{corollary}\label{cor:4.1}
Let $\lambda = 8$. There exists   $\kappa>0$ such that for 
any odd function  $w\in L^\infty$ with $w_x \in L^2$ it holds 
\begin{equation}\label{posVir1bis}
 \int  w Z_1   =0\quad
  \implies \quad
    {\mathcal B}^\sharp(w)
  \geq \kappa \int w_x^2.
\end{equation}
\end{corollary}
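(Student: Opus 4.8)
The plan is to obtain Corollary \ref{cor:4.1} from Lemma \ref{le:posVir} by a routine density argument: the only difference between the two statements is that in the corollary $w$ is merely bounded (with $w_x\in L^2$) rather than in $L^2$, so it suffices to approximate such a $w$ by a sequence of odd $H^1$ functions satisfying the same orthogonality condition and then pass to the limit. First I would record that, under the hypotheses $w\in L^\infty$, $w_x\in L^2$, all the quantities involved are well-defined: from the explicit formula \eqref{V} the potential $V$ decays exponentially, so $V\in L^1(\R)$ and $\int V w^2<\infty$, which gives meaning to $\mathcal B^\sharp(w)=\int(w_x^2-Vw^2)$; similarly $Z_1=Y_1\cosh(x/(\lambda\sqrt2))$ decays exponentially (here $\lambda=8$), hence $Z_1\in H^1(\R)\cap L^1(\R)\cap L^\infty(\R)$ and $\int wZ_1$ is well-defined.

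Next I would fix an even cut-off $\chi\in C_c^\infty(\R)$ with $\chi\equiv1$ on $[-1,1]$ and $\supp\chi\subset[-2,2]$, set $\chi_n(x):=\chi(x/n)$, and define
\[
c_n:=\frac{\int \chi_n w\, Z_1}{\int Z_1^2},\qquad w_n:=\chi_n w-c_nZ_1.
\]
Then $w_n$ is odd (product of the even $\chi_n$ with the odd $w$, minus the odd $Z_1$), compactly supported and bounded, so $w_n\in H^1(\R)$, and $\int w_nZ_1=0$ by construction; moreover $\int\chi_n w\, Z_1\to\int wZ_1=0$ by dominated convergence (since $wZ_1\in L^1$), so $c_n\to0$.

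Finally I would check the two convergences $\int (w_n)_x^2\to\int w_x^2$ and $\mathcal B^\sharp(w_n)\to\mathcal B^\sharp(w)$, after which applying \eqref{posVir1} to each $w_n$ and letting $n\to\infty$ yields \eqref{posVir1bis}. Writing $(w_n)_x=\chi_n'w+\chi_nw_x-c_n(Z_1)_x$, one has $\|\chi_n'w\|_{L^2}\lesssim n^{-1/2}\|w\|_{L^\infty}\to0$, $\chi_nw_x\to w_x$ in $L^2$ (dominated convergence, dominating function $w_x^2$), and $c_n(Z_1)_x\to0$ in $L^2$, so $(w_n)_x\to w_x$ in $L^2$; for the potential term, $w_n\to w$ pointwise with $\sup_n\|w_n\|_{L^\infty}<\infty$, so $\int Vw_n^2\to\int Vw^2$ by dominated convergence with dominating function a constant times $V\in L^1$. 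I do not expect any real obstacle here: the argument is purely a matter of bookkeeping, the one mild point being to keep track of the exponential decay of $V$ and of $Z_1$ (combined with $w\in L^\infty$) which legitimizes every integral and every passage to the limit.
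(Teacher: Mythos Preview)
Your proof is correct and takes essentially the same cutoff approximation approach as the paper. The only cosmetic difference is that the paper first passes to the penalized inequality $\mathcal B^\sharp(w)+C|\langle w,Z_1\rangle|^2\geq\kappa\int w_x^2$ and then applies it to $w_A=\chi_A w$ (using $\langle w_A,Z_1\rangle=o_A(1)$), whereas you keep the original form of \eqref{posVir1} and instead subtract $c_nZ_1$ to enforce exact orthogonality at each step; both variants are standard and equivalent.
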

\begin{proof}
First note that a standard consequence of \eqref{posVir1} is the following estimate:
there exists a  constant $C>0$, depending on $\|Z_1\|_{H^1}$ and the constant in estimate  (\ref{posVir1}) of  Lemma \ref{le:posVir}, and a constant $\kappa>0$,   such that for any odd function 
$w\in H^1$, 
\begin{equation}\label{posVir1tri}
     {\mathcal B}^\sharp(w) + C|\langle w,Z_1 \rangle|^2  \geq \kappa \int w_x^2.
\end{equation}

Let $\chi$ be a  smooth even function such that $\chi(x)=1$ for $|x|<1$, $\chi(x)=0$ for $|x|>2$ and $0\leq \chi\leq 1$. For $A>1$, let $\chi_A(x) = \chi(x/A)$. 
Let $w\in L^\infty$, odd and such that $w_x \in L^2$, $\langle w,Z_1 \rangle = 0$. Set $w_A = w \chi_A$.
Then, we claim
\begin{equation}\label{AAA}
\|w_x\|_{L^2}^2 - \|\partial_x w_A\|_{L^2}^2 = o_A(1),\quad
\int V w^2 - \int V w_A^2 = o_A(1),\quad \langle w_A,Z_1 \rangle = o_A(1),
\end{equation}
where $o_A(1)$ denotes a function such that $\lim_{A\to +\infty} o_A(1)=0$ (possibly depending on $w$).
Indeed,  from direct computations
\[
\left| \int w_x^2 - \int (\partial_x w_A)^2 \right|
\leq \left| \int w_x ^2 (1-\chi_A^2)\right|  + \left| \int w^2 \chi_A'' \chi_A \right|
\lesssim \int_{|x|>A} w_x^2 + \frac 1 A \|w\|_{L^\infty}^2;
\]
\[
\left| \int V w^2 - \int V w_A^2 \right| 
\leq \|w\|_{L^\infty}^2 \int_{|x|>A} V;
\]
\[
\left| \langle w_A,Z_1 \rangle\right|  \leq \left|\int  wZ_1 \right| + \left| \int Z_1 (w-w_A)\right| 
\leq \left| \int Z_1 (w-w_A)\right|\leq  \|w\|_{L^\infty} \int_{|x|>A} |Z_1|.
\]

Applying \eqref{posVir1tri} to $w_A \in H^1$ (which is odd), we obtain
\[
{\mathcal B}^\sharp(w_A) + C |\langle w_A,Z_1 \rangle|^2  \geq \kappa \int (w_A)_x^2.
\]
Thus, by \eqref{AAA},
\[
{\mathcal B}^\sharp(w)   \geq \kappa \int w_x^2 + o_A(1).
\]
and  we obtain the result passing to the limit  $A\to +\infty$.
\end{proof}

\subsection{Coercivity of the bilinear form $\mathcal D$}\label{coer D}
Let us come back to the modified quadratic form $\mathcal D$ introduced in \eqref{def B}. Written in terms of $w$ (see \eqref{def w} and \eqref{B sharp}), we have
\begin{equation}
\label{def Bbis}
\mathcal D(v_1,\alpha)=\mathcal D^\sharp(w,\alpha) :=\mathcal B^\sharp(w)-\alpha \int w \frac 1{\zeta}\left(\psi f'+\frac 12\psi' f-\mathcal L g+4\mu^2 g\right) + \alpha^2\int fg.
\end{equation}
Now we use a modified version of the Fermi Golden Rule to choose a particular $g$. Let $k$ be the function defined in (\ref{var par 5}). Let  
\be\label{def a}
a:=-\frac{\langle\psi f'+\frac 12\psi' f , {\rm Im} (k)\rangle}{\langle \psi'f , {\rm Im} (k)\rangle},
\quad \hbox{so that}\quad
\left\langle \psi f'+\left(a+\frac 12\right)\psi' f, {\rm Im} (k)\right\rangle = 0.
\ee
Numerically, $a\approx   0.687271$. From condition \eqref{fermi 2} in Remark \ref{FGrule}, which ensures that $\langle \psi'f , {\rm Im} (k)\rangle\neq 0$ ($\langle \psi'f , {\rm Im} (k)\rangle \approx -0.327$), and  Lemma \ref{le:ODE}, there exists a unique real Schwartz solution $g$  of 
\be\label{def g}
\mathcal L g -4\mu^2 g= \psi f'+\left(a+\frac 12\right)\psi' f .
\ee
Moreover, in view of the decay of $Y_1$ \eqref{phi_1}, the decay of $f$ \eqref{deff}, and the explicit formula in \eqref{var par 7}, the function $g$ satisfies
\begin{equation}\label{decaygg}
\forall x\in \RR,\quad 
|g(x)|+|g'(x)|\lesssim e^{-\frac {|x|}{\sqrt{2}}}.
\end{equation}
Consequently, \eqref{def g} leads to the simplified expression
\[
\mathcal D^\sharp(w,\alpha)=\mathcal B^\sharp(w)+\alpha  a\int w  \zeta f + \alpha^2\int fg.
\]
Recall that from \eqref{posVir1} we already know that, under $\langle w, Z_1\rangle =0,$ $\mathcal B^\sharp(w) $ is bounded below by $\int w_x^2$. Numerical computations, using \eqref{var par 7}, show that 
\be\label{fg}
 \langle f, g\rangle \approx 0.0163 >0.
\ee
Now we prove
\begin{lemma}\label{B coerc}
Let $\lambda=8$.
There exists $\kappa>0$  such that for any odd $H^1$ function $w$ satisfying $ \langle w, Z_1\rangle=0$,
\begin{equation}
\label{B coerc 1}
\mathcal D^\sharp(w,\alpha) \geq \kappa \left(\alpha^2 + \int  w_x^2 \right).
\end{equation}
\end{lemma}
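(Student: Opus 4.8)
By the choice of $g$ in \eqref{def g} (which exists as a Schwartz function precisely because of the Fermi Golden Rule \eqref{fermi 2}), the object to control is the quadratic form in the pair $(w,\alpha)$
\[
\mathcal D^\sharp(w,\alpha)=\mathcal B^\sharp(w)+a\,\alpha\!\int w\,\zeta f+\alpha^2\langle f,g\rangle ,
\]
which has a single cross term. Lemma \ref{le:posVir} already gives $\kappa_0>0$ with $\mathcal B^\sharp(w)\ge\kappa_0\int w_x^2$ for odd $w\in H^1$ satisfying $\langle w,Z_1\rangle=0$, so the whole point is to absorb the cross term into the two diagonal terms. The plan is to establish an inequality $\big(\int w\,\zeta f\big)^2\le\Lambda\,\mathcal B^\sharp(w)$ with a computable constant $\Lambda>0$, which reduces \eqref{B coerc 1} to the positive definiteness of the $2\times2$ matrix
\[
M=\begin{pmatrix} 1 & -\tfrac12|a|\sqrt{\Lambda}\\[2pt] -\tfrac12|a|\sqrt{\Lambda} & \langle f,g\rangle\end{pmatrix};
\]
indeed, if $M\ge\delta\,\mathrm{Id}$ then, writing $s=\sqrt{\mathcal B^\sharp(w)}$ and $t=|\alpha|$, one gets $\mathcal D^\sharp(w,\alpha)\ge(s,t)M(s,t)^{\top}\ge\delta\big(\mathcal B^\sharp(w)+\alpha^2\big)\ge\delta\min(\kappa_0,1)\big(\int w_x^2+\alpha^2\big)$. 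The matrix $M$ is positive definite iff $\langle f,g\rangle>0$ — true numerically, $\approx 0.0163$, by \eqref{fg} — and $a^2\Lambda<4\langle f,g\rangle$.

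\textbf{The cross term.} Since $\zeta$ is even and $f$ odd and Schwartz, $\zeta f$ is an odd Schwartz function, hence has an even Schwartz primitive $\Theta(x):=\int_{-\infty}^x\zeta f$ (it vanishes at $\pm\infty$ because $\int_{\RR}\zeta f=0$); integrating by parts, $\int w\,\zeta f=-\int w_x\,\Theta$, which already yields the crude bound $\big(\int w\,\zeta f\big)^2\le\|\Theta\|_{L^2}^2\,\kappa_0^{-1}\,\mathcal B^\sharp(w)$. This choice of $\Lambda$ is, however, too lossy: the constant $\kappa_0$ produced by Lemma \ref{le:posVir} is small and the resulting $M$ fails to be positive. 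The sharp estimate is Cauchy--Schwarz for the \emph{$\mathcal B^\sharp$-inner product} itself. Let $X$ be the completion of $\{w\in H^1\text{ odd}:\langle w,Z_1\rangle=0\}$ for the norm $\|w_x\|_{L^2}$ (on odd functions $w(0)=0$ removes the $\dot H^1$ ambiguity, and the exponential decay of $V$ from \eqref{V} makes $\int Vw^2$ a bounded perturbation, so $\mathcal B^\sharp$ is a coercive inner product on $X$ by Lemma \ref{le:posVir}); since $w\mapsto\int w\,\zeta f=-\int w_x\,\Theta$ is bounded on $X$, the Riesz theorem gives $h\in X$ with $\mathcal B^\sharp(h,w)=\int w\,\zeta f$ for all $w\in X$, whence $\big(\int w\,\zeta f\big)^2=\mathcal B^\sharp(h,w)^2\le\mathcal B^\sharp(h)\,\mathcal B^\sharp(w)=\big(\int h\,\zeta f\big)\,\mathcal B^\sharp(w)$, so one may take $\Lambda:=\int h\,\zeta f>0$. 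Concretely $h$ is the decaying odd solution of $\mathcal L^\sharp h=\zeta f+cZ_1$ (with $\mathcal L^\sharp$ as in \eqref{L_sharp}) for the unique $c\in\RR$ making $\langle h,Z_1\rangle=0$; it is obtained from a bounded solution $h_0$ of $\mathcal L^\sharp h_0=\zeta f$ and the function $Z_1^\sharp$ of Remark \ref{alter} via $h=h_0-\langle h_0,Z_1\rangle\,\langle Z_1^\sharp,Z_1\rangle^{-1}\,Z_1^\sharp$, which is legitimate since $\langle Z_1^\sharp,Z_1\rangle\approx-2.63\ne0$ by \eqref{YZ}.

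\textbf{The numerical inequality — and the main obstacle.} With $\Lambda=\int h\,\zeta f$, the lemma is reduced to the single scalar inequality
\[
a^2\!\int h\,\zeta f<4\,\langle f,g\rangle ,
\]
and this verification is the heart of the matter: it requires numerically solving the linear ODE $\mathcal L^\sharp h=\zeta f+cZ_1$ (with $V$ explicit from \eqref{V}), computing $g$ from the kernel formula \eqref{var par 7} applied to the source $\psi f'+(a+\tfrac12)\psi'f$, and then evaluating the two integrals $\int h\,\zeta f$ and $\langle f,g\rangle\approx 0.0163$; the specific value $\lambda=8$ is exactly one for which both the coercivity estimate $\mathcal B^\sharp(w)\ge\kappa_0\int w_x^2$ of Lemma \ref{le:posVir} and this balance hold simultaneously, so I expect the fine tuning of $\lambda$ to be the delicate part. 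A secondary, purely soft point to pin down is the Riesz construction above on the completion $X$, together with the fact that $\mathcal L^\sharp$ has no odd $L^2$ zero mode and no zero-energy resonance (so that $h_0$ exists as a decaying function), both of which follow from Lemma \ref{le:posVir} and the exponential decay of $V$.
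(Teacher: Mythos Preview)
Your approach is correct and is essentially the same as the paper's: the paper defines $h^\sharp$ as the odd, decaying solution of $\mathcal L^\sharp h^\sharp = a\zeta f + bZ_1$ with $b$ fixed by $\langle h^\sharp,Z_1\rangle=0$ (equivalently $\langle a\zeta f+bZ_1,Z_1^\sharp\rangle=0$), then orthogonally decomposes $w=w^\perp+ c\,h^\sharp$ with respect to the $\mathcal B^\sharp$-inner product to reduce $\mathcal D^\sharp$ to $\mathcal B^\sharp(w^\perp)+(c^2+c\alpha)\langle h^\sharp,a\zeta f+bZ_1\rangle+\alpha^2\langle f,g\rangle$; your Riesz representative $h$ is simply $a^{-1}h^\sharp$, and your scalar condition $a^2\Lambda<4\langle f,g\rangle$ coincides with the paper's $\langle h^\sharp,h\rangle<4\langle f,g\rangle$. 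The paper supplies the missing numerical value $\langle h^\sharp,h\rangle\approx 0.0147$ (computed by shooting, see Remark \ref{shooting}), which indeed is below $4\langle f,g\rangle\approx 0.0652$ and closes the argument.
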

\begin{proof}
Denote $h:=a\zeta f+b Z_1$, where  $b$ is chosen so that  (see \eqref{YZ})
\[
\langle h , Z_1^\sharp\rangle=0.
\] 
Here  $Z_1^\sharp$ solves $\mathcal L^{\sharp} Z_1^\sharp=Z_1$ and is such that $Z_1^\sharp\in L^\infty$, $(Z_{1}^\sharp)_x\in L^2$  (see Remark \ref{alter}).
Note that $h$ is odd. Let $h^\sharp\in L^\infty$ be the odd function such that  $\mathcal L^\sharp h^\sharp=h$ and $\int (h_x^\sharp)^2<\infty$.
Since $\langle w, Z_1\rangle=0$, we have 
$
a \int w\zeta f= \langle w , h\rangle 
$, and thus
\[
\mathcal D^\sharp(w,\alpha)=\mathcal B^\sharp(w)+\alpha \langle w ,h\rangle  + \alpha^2 \langle f, g\rangle.
\]
Furthermore we observe
\[
\langle h^\sharp, Z_1\rangle = \langle h^\sharp, \mathcal L^\sharp Z_1^\sharp\rangle = \langle h, Z_1^\sharp \rangle =0,
\]
hence (since $h^\sharp$ is odd), by Corollary \ref{cor:4.1},
\[
0<\mathcal B^\sharp(h^\sharp)= \langle \mathcal L^\sharp h^\sharp, h^\sharp \rangle = \langle h^\sharp,h \rangle.
\]
Given $w$ odd, we decompose as follows
\be\label{deco1}
w=w^\perp+ c \, h^\sharp  \quad \mbox{where}\quad \langle w^\perp,  h\rangle =\langle w^\perp , \mathcal L^\sharp h^\sharp \rangle=0
\quad \hbox{and} \quad \langle w^\perp,Z_1\rangle = \langle w,Z_1\rangle =0.
\ee
Thus, by orthogonality
\[
\mathcal B^\sharp(w) =\mathcal B^\sharp(w^\perp)+ c^2\mathcal B^\sharp(h^\sharp),
\]
and
\[
\mathcal D^\sharp(w, \alpha)= \mathcal B^\sharp(w^\perp)+\left( c^2  +c \alpha \right) \langle h^\sharp,h\rangle +\alpha^2 \langle f, g\rangle.
\]
Numerical computations show  that
\begin{equation}
\label{numer 2}
\langle h^\sharp,h\rangle\approx 0.0147. 
\end{equation}
In particular, since $0<\langle h^\sharp,h\rangle  < 4 \langle f, g\rangle $ (see \eqref{fg}),  we obtain for all $\alpha\in \RR$, $c\in \RR$,
\[
\left( c^2  +c \alpha \right) \langle h^\sharp,h\rangle +\alpha^2 \langle f, g\rangle \gtrsim \alpha^2 + c^2.
\]
Combined with Corollary \ref{cor:4.1} and \eqref{deco1}, this gives
\[
\mathcal D^\sharp(w, \alpha) \gtrsim \alpha^2 + c^2 + \int (w_x^\perp)^2
\gtrsim \alpha^2  + \int w_x^2,
\]
as required.
\end{proof}

\begin{remark}\label{shooting}
The values in \eqref{numer 2} are obtained using elementary numerical computations. We  find the functions $h^\sharp$ and $g$ by  shooting  
(in particular, we find $(h^\sharp)'(0)\approx 0.0249$ and $g'(0)\approx -0.333$) and then use numerical integration. Observe that   the functions considered here or their derivatives are exponentially decaying, which facilitates the computations.
\end{remark}

\section{End of the proof of Theorem \ref{TH1}}\label{ERROR}

As in statement of Theorem \ref{TH1}, we consider an odd function $\varphi^{in}\in H^1\times L^2$ satisfying $\|\varphi^{in}\|_{H^1\times L^2}<\varepsilon$ for some small $\varepsilon>0$ to be chosen. By Proposition \ref{henry}, the corresponding solution $\varphi(t)$ of \eqref{eqvarphi} is global in $H^1\times L^2$ and satisfies, for all $t\in \R$,
\begin{equation}\label{smallphi}
 \|\varphi(t)\|_{H^1\times L^2} \lesssim \varepsilon.
\end{equation}
Now we use the decomposition and computations of Section \ref{sec:2}, introducing in particular the functions $u(t)$, $z(t)$, $\alpha(t)$, $\beta(t)$ and $v(t)$
as in \eqref{defz}, \eqref{defu}, \eqref{defzu}, \eqref{defab} and \eqref{defv}. 
Note that from \eqref{smallphi}, it holds
\begin{equation}\label{allsmall}
\forall t\in \RR,\quad \|u(t)\|_{H^1\times L^2} +\|v(t)\|_{H^1\times L^2}+\|u_1(t)\|_{L^\infty} +\|v_1(t)\|_{L^\infty}+
|z(t)|\lesssim \varepsilon.
\end{equation}
In order to simplify some estimates, we define
\begin{equation}\label{nloc}
\|v_1\|_{H^1_{\omega}}^2 :=
\int \left( |\partial_x v_1|^2 + v_1^2   \right) \sech\left(\frac  x{2 \sqrt{2}}\right),\quad
\|v_2\|_{L^2_{\omega}}^2 :=
\int  v_2^2   \sech\left(\frac  x{2 \sqrt{2}}\right),
\end{equation}
and
\begin{equation}\label{nloc2}
\|v\|_{H^1_{\omega}\times L^2_{\omega}}^2 :=\|v_1\|_{H^1_{\omega}}^2 + \|v_2\|_{L^2_{\omega}}^2.
\end{equation}

\subsection{Control of the error terms and conclusion of the Virial argument}
The key ingredient of the proof of asymptotic stability in the energy space is the following result.
\begin{proposition}\label{pr:11}
For $\varepsilon>0$ small enough,
\begin{equation}\label{11un}
\int_{-\infty}^{+\infty} \left( |z(t)|^4  + \|v(t)\|_{H^1_{\omega}\times L^2_{\omega}}^2  \right) dt 
\lesssim \varepsilon^2 .
\end{equation}
\end{proposition}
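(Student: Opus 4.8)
The plan is to follow exactly the scheme outlined in Section \ref{sec:2}, now keeping careful track of all nonlinear error terms. First I would assemble the three differential identities established (formally) in Sections \ref{sec:2} and \ref{VIRIAL}: the identity \eqref{IpJ} for $\frac{d}{dt}(\mathcal I+\mathcal J)$, the identity \eqref{12deux} for $\dot\gamma$ with $\gamma=\alpha\beta$, and the identity \eqref{12trois} for $\frac{d}{dt}\int\sech\big(\frac{x}{2\sqrt2}\big)v_1v_2$. For each of these, the essential new work is to bound the remainder terms: the virial rest term $\mathcal R_{\mathcal D}$ from \eqref{virial error}, and the corresponding rest terms in the $\gamma$ and energy-type identities. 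Using the definitions of $F_1,F_2,F_\alpha,F_\beta$ (all of which are $\mathcal O_3 = \mathcal O(|z|^3,|z|\|u\|,\|u\|^2)$), the a priori smallness \eqref{allsmall}, the exponential decay \eqref{decayq}, \eqref{decaygg} of $q$ and $g$, the boundedness of $\psi$ and the Schwartz character of $\psi'$, $\zeta$, I would show that each rest term is controlled by $\varepsilon\big(|z(t)|^4+\|v(t)\|_{H^1_\omega\times L^2_\omega}^2\big)$ plus possibly an absorbable multiple of the good terms. The point is that every factor of $u$ or $v$ appears paired against an exponentially localized weight coming from $f$, $q$, $g$, $\psi'$, or $\sech$, so everything can be estimated in the local norms $\|\cdot\|_{H^1_\omega}$, $\|\cdot\|_{L^2_\omega}$; and each nonlinear term carries at least one extra small factor $\varepsilon$ beyond the quadratic order.

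The second step is to invoke the coercivity of $\mathcal D$. By Lemma \ref{B coerc}, using the orthogonality $\langle v_1,Y_1\rangle=0$ (equivalently $\langle w,Z_1\rangle=0$ with $w=\zeta v_1$) and the oddness of $v_1$, we have $\mathcal D(v_1,\alpha)=\mathcal D^\sharp(w,\alpha)\geq \kappa(\alpha^2+\int w_x^2)$. I would then upgrade $\int w_x^2$ to control of the full weighted norm $\|v_1\|_{H^1_\omega}^2$: since $w=\zeta v_1$ with $\zeta=\sech\big(\frac{x}{8\sqrt2}\big)$, a routine computation relating $\int w_x^2$, $\int w^2$ and the weighted derivative and $L^2$ norms of $v_1$ (using that $\zeta^2$ dominates $\sech(\frac{x}{2\sqrt2})$ up to a constant and that the odd function $v_1$ satisfies a weighted Poincaré/Hardy inequality, or alternatively comparing $\int w_x^2$ with $\int w^2$ via the one-negative-eigenvalue argument of Claim \ref{cc1}) gives $\mathcal D(v_1,\alpha)\gtrsim \alpha^2+\|v_1\|_{H^1_\omega}^2$. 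Combined with the error estimates, this yields $-\frac{d}{dt}(\mathcal I+\mathcal J)\geq \kappa(\alpha^2+\|v_1\|_{H^1_\omega}^2) - C\varepsilon(|z|^4+\|v\|_{H^1_\omega\times L^2_\omega}^2)$, which is \eqref{12un}.

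The third step is to combine the three estimates. Note $|z|^4=\alpha^2+\beta^2$, and the $\dot\gamma$ identity \eqref{12deux} controls $\beta^2-\alpha^2$ in time-integrated form, so a linear combination $\delta_1(\mathcal I+\mathcal J) + \delta_2\gamma + \delta_3\int\sech(\frac{x}{2\sqrt2})v_1v_2$ with suitably chosen small positive constants $\delta_1\gg\delta_2\gg\delta_3>0$ has time derivative bounded below by $\kappa'(|z|^4+\|v\|_{H^1_\omega\times L^2_\omega}^2) - C\varepsilon(\cdots)$; choosing $\varepsilon$ small enough absorbs the error into the good term. Here one must also check that $\gamma$, $\int\sech v_1v_2$ and $\mathcal I+\mathcal J$ are all bounded by $\varepsilon^2$ uniformly in $t$ (immediate from \eqref{allsmall} and the boundedness of $\psi$, $g$, $\sech$). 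Integrating over $[-T,T]$ and letting $T\to\infty$, the boundary terms stay $O(\varepsilon^2)$ and we obtain $\int_{-\infty}^{+\infty}(|z(t)|^4+\|v(t)\|_{H^1_\omega\times L^2_\omega}^2)\,dt\lesssim \varepsilon^2$, which is \eqref{11un}.

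The main obstacle I anticipate is the careful bookkeeping in the error estimates for $\mathcal R_{\mathcal D}$ and the analogous rest terms, specifically verifying that the cubic-and-higher terms $F_1,F_2,F_\alpha,F_\beta$ — which involve $3H\varphi_1^2+\varphi_1^3$ expanded via $\varphi_1 = z_1Y_1 + u_1 = z_1Y_1 + v_1 - |z|^2 q$ — only ever contribute with at least one localized weight and one extra $\varepsilon$. The delicate point is the quadratic term $3H\varphi_1^2$: its projection onto $Y_1$ and its complement were already absorbed into the definitions of $z$, $f$, and the change of variable $v_1=u_1+|z|^2q$, so what remains in $F_1,F_2$ genuinely is cubic in $(z,v)$ (or carries a factor $|z|^2$ against $q$, which is exponentially localized); I would need to write this expansion out explicitly and check each monomial. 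A secondary technical issue is the step relating $\int w_x^2$ to $\|v_1\|_{H^1_\omega}^2$, which requires that the weight in $\|\cdot\|_{H^1_\omega}$ — namely $\sech(\frac{x}{2\sqrt2})$ — decays slower than $\zeta^2=\sech^2(\frac{x}{8\sqrt2})$; this is true ($\frac{1}{8\sqrt2}<\frac{1}{2\sqrt2}$ but squared: $2\cdot\frac{1}{8\sqrt2}=\frac{1}{4\sqrt2}<\frac{1}{2\sqrt2}$, so $\zeta^2\gtrsim\sech(\frac{x}{2\sqrt2})$ indeed holds), and this is presumably why the specific constant $\lambda=8$ and the weight $\sech(\frac{x}{2\sqrt2})$ were chosen compatibly.
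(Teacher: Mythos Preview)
Your overall strategy matches the paper's exactly: establish the three differential inequalities (for $\mathcal I+\mathcal J$, for $\gamma=\alpha\beta$, and for $\int\sech(\tfrac{x}{2\sqrt2})v_1v_2$), invoke Lemma~\ref{B coerc}, upgrade $\int w_x^2$ to $\|v_1\|_{H^1_\omega}^2$ (this is \eqref{rwv}), and combine. However, there is one genuine gap in your error-term bookkeeping.

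You assert that in $\mathcal R_{\mathcal D}$ ``every factor of $u$ or $v$ appears paired against an exponentially localized weight coming from $f$, $q$, $g$, $\psi'$, or $\sech$, so everything can be estimated in the local norms.'' This is not correct for the term $\int v_1\big(\psi\,\partial_x F_2+\tfrac12\psi'F_2\big)$. The function $F_2$ contains the piece $-3Hv_1^2-v_1^3$, which carries \emph{no} exponential decay from $Y_1$ or $q$; here the only weight is $\psi$ itself, which is merely bounded. After integration by parts this reduces to controlling $\int\psi'\,|v_1|^3$, but $\psi'=\sech^2(\tfrac{x}{8\sqrt2})\sim e^{-|x|/4\sqrt2}$ decays \emph{more slowly} than the $H^1_\omega$ weight $\sech(\tfrac{x}{2\sqrt2})\sim e^{-|x|/2\sqrt2}$. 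Hence $\int\psi'|v_1|^3\le\varepsilon\int\psi'v_1^2$ is \emph{not} bounded by $\varepsilon\|v_1\|_{H^1_\omega}^2$, and your localization scheme breaks down precisely here. The paper supplies the missing ingredient in \eqref{SFcinq}: writing the integral in terms of $w=\zeta v_1$, one gets $\int_0^\infty e^{x/\lambda\sqrt2}|w|^3$, and an integration-by-parts/absorption argument (using $w(0)=0$ by oddness and $|w|\le\zeta\|v_1\|_{L^\infty}$) yields $\int\psi'|v_1|^3\lesssim\|v_1\|_{L^\infty}\|\partial_x w\|_{L^2}^2\lesssim\varepsilon\int w_x^2$. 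This closes the estimate, but it is a specific trick, not a consequence of generic localization.

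A smaller point: in your combination step you write that the linear combination has derivative $\geq\kappa'(\cdots)-C\varepsilon(\cdots)$. In fact the third identity \eqref{10trois} carries a bad term $-C\|v_1\|_{H^1_\omega}^2$ \emph{without} an $\varepsilon$ factor, so its coefficient $\sigma$ must be chosen small (independently of $\varepsilon$) so that this term is absorbed by the $\kappa_0\|v_1\|_{H^1_\omega}^2$ coming from \eqref{10un}; only afterwards is $\varepsilon$ taken small to absorb the remaining errors. Your hierarchy $\delta_1\gg\delta_2\gg\delta_3$ is roughly right in spirit but the reason $\delta_3$ must be small is this, not an $\varepsilon$-smallness.
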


\begin{proof}
Let 
\be\label{ga}
\gamma(t) := \alpha(t)\beta(t),
\ee
and recall the virial-type quantities $\mathcal I(t)$, $\mathcal J(t)$, already defined in \eqref{defI}, \eqref{defJ} for
$\psi$ as in \eqref{defpsi} for $\lambda=8$ and $g$ introduced in \eqref{def g}.

\medskip

The proof of \eqref{11un} is based on a suitable combination of the following three estimates, which hold for some fixed constants $\kappa_0, C>0$:
\begin{align}
 \frac d{dt} \gamma &\geq   2 \mu ( \beta^2- \alpha^2 )  - C \varepsilon \left( |z(t)|^4+  \|v_1\|_{H^1_{\omega}}^2\right),
\label{10deux}\\
 -\frac d{dt} (\mathcal I + \mathcal J)
&\geq   \kappa_0  \left( \alpha^2+ \|v_1\|_{H^1_{\omega}}^2   \right)
- C \varepsilon   \left( |z(t)|^4+\|v_2\|_{L^2_{\omega}}^2\right) , 
\label{10un} \\
 2 \frac d{dt} \int \sech\left(\frac x{2 \sqrt{2}}\right) v_1 v_2 
&\geq    \|v_2\|_{L^2_{\omega}}^2
-C\left(  |z(t)|^4+    \|v_1\|_{H^1_{\omega}}^2 \right).
\label{10trois}
\end{align}

\medskip

{\bf Step 1.} Proof of  \eqref{11un} assuming \eqref{10deux}, \eqref{10un}   and \eqref{10trois}.
For $\sigma>0$ small to be chosen, let
\[
\mathcal K :=   \frac{\kappa_0}{4\mu} \gamma -(\mathcal I + \mathcal J) +  2\sigma \int \sech\left(\frac x{2 \sqrt{2}}\right) v_1 v_2.
\]
Recall that $\mathcal K $ is a time dependent function. From  \eqref{10deux}, \eqref{10un} and \eqref{10trois}, we obtain
\[
\frac d{dt}{\mathcal K} \geq \frac {\kappa_0} 2 (\alpha^2 + \beta^2) + \kappa_0 \|v_1\|_{H^1_{\omega}}^2  +\sigma \|v_2\|_{L^2_{\omega}}^2 - C \left(\sigma+\varepsilon\right) \left( |z(t)|^4+ \|v_1\|_{H^1_{\omega}}^2\right) - C \varepsilon \|v_2\|_{L^2_{\omega}}^2 .
\]
Note that from \eqref{defab} we have $\alpha^2+\beta^2 = |z|^4$. Thus,  choosing $\sigma>0$ sufficiently small, and then $\varepsilon>0$  small enough, we obtain
\begin{equation}\label{dtK}
\frac d{dt}{\mathcal K} \gtrsim   |z(t)|^4 +   \|v\|_{H^1_{\omega}\times L^2_{\omega}}^2  .
\end{equation}
By the expressions of $\mathcal I$, $\mathcal J$, $\gamma$ and \eqref{allsmall}, we easily  see that
\begin{equation}\label{Kt}
\forall t\in \RR, \quad |\mathcal K(t)|\lesssim \|v(t)\|_{H^1\times L^2}^2 + |z(t)|^4 \lesssim \varepsilon^2.
\end{equation}
Therefore, integrating \eqref{dtK} on $[-t_0,t_0]$ and passing to the limit as $t_0\to +\infty$, we find \eqref{pr:11}.

\medskip

To finish the proof, we only have to prove \eqref{10deux}, \eqref{10un} and \eqref{10trois}.

\medskip

\textbf{Step 2.} Preliminary computations and estimates.
First,  from the computations of Section \ref{sec:2}, we give the expressions of $F_\alpha$, $F_\beta$, $F_1$ and $F_2$ in \eqref{syd}.
Note that from \eqref{defz}, \eqref{eqvarphi} and the fact that $\mathcal L Y_1 = \mu^2 Y_1$,
\begin{equation}\label{5p12}\left\{\begin{aligned}
& \dot z_1 = \langle \dot \varphi_1,Y_1\rangle =  \langle  \varphi_2,Y_1\rangle = \mu z_2, \\
& \dot z_2 = \frac1\mu \langle \dot \varphi_2,Y_1\rangle =   -\frac1\mu \langle \mathcal L \varphi_1 +(3 H \varphi_1^2 + \varphi_1^3),Y_1\rangle    = - \mu z_1 -\frac 1{\mu}  \langle 3H\varphi_1^2 +\varphi_1^3,Y_1\rangle.
\end{aligned}\right.\end{equation}
(Recall that $\varphi_1=z_1 Y_1+u_1$ and $\langle u_1,Y_1\rangle =0$, see \eqref{OrthoU}.) 
In particular, \eqref{defab} leads to
\[
\dot \alpha = 2 z_1 \dot z_1 - 2 z_2 \dot z_2 = 2 \mu \beta + \frac 2\mu z_2 \langle 3 H \varphi_1^2 + \varphi_1^3, Y_1\rangle,
\]
\[
\dot \beta = 2 z_1 \dot z_2 + 2 \dot z_1 z_2 = - 2 \mu \alpha - \frac 2\mu z_1 \langle 3 H \varphi_1^2 + \varphi_1^3, Y_1\rangle.
\]
Thus, we obtain in \eqref{syd},
\[
F_\alpha := \frac 2 \mu z_2 \langle 3 H \varphi_1^2 + \varphi_1^3, Y_1\rangle,\quad
F_\beta := - \frac 2 \mu z_1 \langle 3 H \varphi_1^2 + \varphi_1^3, Y_1\rangle.
\]
Next, since
\begin{equation}\label{dz2}
\frac d{dt} |z|^2 = 2  z_1 \dot z_1 + 2 \dot z_2 z_2 =   -\frac 2\mu z_2 \langle 3 H \varphi_1^2 + \varphi_1^3, Y_1\rangle
= -F_\alpha,
\end{equation}
we deduce, from \eqref{defv}, \eqref{defu}, \eqref{eqvarphi} and \eqref{5p12}, that
\[
\begin{aligned}
\dot v_1 & = \dot u_1 + \frac d{dt} |z|^2  q = \dot\varphi_1 -\dot z_1 Y_1 - q F_\alpha\\
 &  =\varphi_2 - \mu z_2 Y_1 -q F_\al = u_2-q F_\al \\
 & =v_2 -qF_\al,
\end{aligned}
\]
and that in \eqref{eqofv} and \eqref{syd}, 
\begin{equation}
\label{f1 falph}
F_1 := - q F_\alpha.
\end{equation}
We have by direct computations
\[
\begin{aligned}
\dot u_2 & = \dot \varphi_2 -\mu \dot z_2 Y_1 = - \mathcal L \varphi_1 - (3 H \varphi_1^2 + \varphi_1^3)+ \mu^2 z_1Y_1 +   \langle 3H\varphi_1^2 +\varphi_1^3,Y_1\rangle Y_1 \\
& =- \mathcal L u_1  - (3 H (u_1 +z_1 Y_1)^2 +\varphi_1^3) +   \langle 3H(u_1 +z_1 Y_1)^2 + \varphi_1^3,Y_1\rangle Y_1\\
& =- \mathcal L u_1  - 3z_1^2 ( H Y_1^2 -  \langle H Y_1^2,Y_1\rangle Y_1 )    - (3 H (u_1^2 + 2z_1 u_1 Y_1) +\varphi_1^3) +   \langle 3H(u_1^2 + 2z_1 u_1 Y_1) + \varphi_1^3,Y_1\rangle Y_1\\
& = - \mathcal L u_1 -2 z_1^2 f +F_u,
\end{aligned}
\]
so that  in \eqref{eqofu},
\[
F_u := - \left[3 H(u_1^2 + 2u_1 z_1 Y_1)+\varphi_1^3  - 
\langle 3 H(u_1^2 + 2u_1 z_1 Y_1)+\varphi_1^3 , Y_1 \rangle Y_1 \right].
\]
We also observe that 
\[
\dot v_2 = \dot u_2 
= - \mathcal L v_1 - \alpha f + F_u,
\]
and thus
\[
F_2 = F_u = - \left[3 H(u_1^2 + 2u_1 z_1 Y_1)+\varphi_1^3  - 
\langle 3 H(u_1^2 + 2u_1 z_1 Y_1)+\varphi_1^3 , Y_1 \rangle Y_1 \right].
\]

\medskip

Second, we prove
\begin{equation}\label{rwv}
\|v_1\|_{H^1_{\omega}} \lesssim  \|\partial_x w\|_{L^2},
\end{equation}
where we recall from Section \ref{VIRIAL} the notation $w(t,x) = v_1(t,x) \zeta(x)=v_1(t,x) \sech\left(\frac x{8\sqrt{2}}\right)$ (see \eqref{def w}).

Indeed, we  observe   that from \eqref{cc2} (with the choice $\lambda_0=100$),
\begin{equation}\label{h1}
\|\partial_x w\|_{L^2}^2\gtrsim
\int  \sech^2\left(\frac {x}{100}\right) w^2=
\int \sech^2\left(\frac {x}{100}\right)\sech^2\left(\frac {x}{8 \sqrt{2}}\right) v_1^2\gtrsim
\int \sech\left(\frac {x}{2 \sqrt{2}}\right) v_1^2 .
\end{equation}
Next, we have
\begin{align*}
\|\partial_x w\|_{L^2}^2 &\gtrsim \int \sech^2\left(\frac {x}{100}\right) |\partial_x w|^2 
  = \int \sech^2\left(\frac {x}{100}\right) |\zeta \partial_x v_1 + \zeta' v_1 |^2\\
& \gtrsim \int \sech^2\left(\frac {x}{100}\right) \sech^2\left(\frac {x}{8 \sqrt{2}}\right)|\partial_x v_1|^2
+ 2 \int \sech^2\left(\frac {x}{100}\right) \zeta \zeta' (\partial_x v_1) v_1 + \int \sech^2\left(\frac {x}{100}\right) (\zeta')^2 v_1^2\\
& \gtrsim \int \sech\left(\frac {x}{2 \sqrt{2}}\right)|\partial_x v_1|^2
+ \int v_1^2\left( - \left(\sech^2\left(\frac {x}{100}\right) \zeta \zeta'\right)' + \sech^2\left(\frac {x}{8 \sqrt{2}}\right) (\zeta')^2\right).
\end{align*}
Thus, using \eqref{h1},
\[
\int \sech\left(\frac {x}{2 \sqrt{2}}\right)|\partial_x v_1|^2
\lesssim \|\partial_x w\|_{L^2}^2+\int \sech^2\left(\frac {x}{100}\right) \sech^2\left(\frac {x}{8 \sqrt{2}}\right)v_1^2
\lesssim \|\partial_x w\|_{L^2}^2 ,
\]
which completes the proof of \eqref{rwv}.

\medskip

\textbf{Step 3.} Proof of \eqref{10deux}.
From  \eqref{syd} we find
\begin{equation}\label{moondawn 3}
\dot {\gamma} =
\dot \alpha \beta + \alpha \dot \beta 
= 2 \mu \left( \beta^2 - \alpha^2\right) + \mathcal R_{\gamma}, 
\quad \hbox{where} \quad \mathcal R_{\gamma} = \beta F_\alpha + \alpha F_\beta.
\end{equation}
Replacing $\varphi_1=u_1 + z_1 Y_1 = v_1 - |z|^2 q + z_1 Y_1$ in the expression of $F_\alpha$ and $F_\beta$,
then using the explicit decay of $Y_1$ (see \eqref{phi_1}) and \eqref{allsmall}, we have
\begin{equation}\label{Fab}
|F_\alpha|+|F_\beta|\lesssim |z| \left( |z|^2 + \|v_1\|_{L^2_{\omega}}^2\right).
\end{equation}
From the definition of $\alpha$, $\beta$ and \eqref{allsmall}  we obtain  finally 
\[
|\mathcal R_\gamma|\lesssim |z|^3 \left( |z|^2 + \|v_1\|_{L^2_{\omega}}^2\right)
 \lesssim \varepsilon \left(|z|^4 +  \|v_1\|_{L^2_{\omega}}^2\right).
\] 

\medskip
 
{\bf Step 4.} Proof of \eqref{10un}.
From \eqref{IpJ}, \eqref{B coerc 1} and \eqref{rwv}, it is sufficient to prove the following estimate
\begin{equation}\label{lll}
|\mathcal R_{\mathcal D}| \lesssim \varepsilon \left( |z(t)|^4+ \|\partial_x w\|_{L^2}^2 + \|v_2\|_{L^2_{\omega}}^2 \right),
\end{equation}
where $\mathcal R_{\mathcal D}$ is defined in \eqref{virial error}.

Using  \eqref{Fab},  \eqref{decaygg} and Cauchy-Schwarz inequality, we have
\begin{equation}\label{SFun}
\left|F_\alpha \int v_2 g\right| + \left|F_\beta \int v_1 g\right| \lesssim 
|z| \left( |z|^2 +  \|v_1\|_{L^2_{\omega}}^2\right) \left( \|v_1\|_{L^2_{\omega}}+\|v_2\|_{L^2_{\omega}}\right).
\end{equation}
From \eqref{decaygg}, (\ref{f1 falph}), (\ref{Fab}) and   \eqref{decayq}, we get
\begin{equation}\label{SFdeux}
\left|\beta \int g F_1 \right| + \left|\int v_2 \left(\psi \partial_x F_1 + \frac 12 \psi' F_1\right) \right| \lesssim 
|z|\left( |z|^2 + \|v_1\|_{L^2_{\omega}}^2\right) \left(|z|^2+  \|v_2\|_{L^2_{\omega}}\right)  .
\end{equation}
By \eqref{decaygg} and \eqref{allsmall}, we obtain
\begin{equation}\label{SFtrois}
\left|\alpha \int g F_2 \right| \lesssim 
|z|^2  \left( |z|^3 +|z|  \|v_1\|_{L^2_{\omega}}+\|v_1\|_{L^2_{\omega}}^2\right) .
\end{equation}
Now, we address the only remaining term in \eqref{virial error}, which is $\int v_1 \left(\psi \partial_x F_2+\frac 12 \psi' F_2\right)$.
We decompose
\[
F_2 = \widetilde F_2  - 3 H v_1^2 - v_1^3.
\]
Observe that $\widetilde F_2$  contains only terms with an explicit decay $e^{-\frac{|x|}{\sqrt{2}}}$ (coming from $Y_1(x)$, $q(x)$ and their derivatives).
Thus, by similar arguments as before,
\begin{equation}\label{SFquatre}
\left| \int v_1 \left(\psi \partial_x \widetilde F_2+\frac 12 \psi' \widetilde F_2\right)  \right| \lesssim 
  \|v_1\|_{L^2_{\omega}} \left( |z|^3 +|z|  \|v_1\|_{L^2_{\omega}} + \|v_1\|_{L^2_{\omega}}^2\right) .
\end{equation}

We are now reduced to controlling the term
\[
- \int v_1 \left( \psi \partial_x \left(3 H v_1^2 + v_1^3\right) +\frac 12 \psi'  \left(3 H v_1^2 + v_1^3\right) \right)
= \int  \left(\psi \partial_x v_1 + \frac 12\psi' v_1\right) \left(3 H v_1^2 + v_1^3\right) .
\]
Integrating by parts,
\[
 3 \int  \left(\psi \partial_x v_1 + \frac 12 \psi' v_1\right)  H v_1^2  
 = \int \left( \frac 12\psi' H - \psi H'\right) v_1^3.
\]
We claim 
\begin{equation}\label{SFcinq}
\int  (\psi'+\psi H') |v_1|^3\lesssim \|v_1\|_{L^\infty} \|\partial_x w\|_{L^2}^2 \lesssim \varepsilon \|\partial_x w\|_{L^2}^2.
\end{equation}
Indeed, by parity, the definition of $\psi$ \eqref{defpsi} and $w$ \eqref{def w}, and the decay property of $H'$, we have (with $\lambda=8$)
\begin{align*}
\int  (\psi'+\psi H') |v_1|^3
\lesssim   \int_0^{+\infty} e^{-\frac {2 x}{{\lambda } \sqrt{2} }} |v_1|^3
\lesssim   \int_0^{+\infty} e^{\frac x{\lambda \sqrt{2}}} |w|^3.
\end{align*}
Then, integrating by parts, using $w(0)=0$ (the function $w$ is odd)
\begin{align*}
\int_0^{+\infty} e^{\frac x{\lambda \sqrt{2}}} |w|^3
&= - \lambda\sqrt{2}  \int_0^{+\infty} e^{\frac x{\lambda \sqrt{2}}} \partial_x(|w|^3)
= - 3 \lambda \sqrt{2}  \int_0^{+\infty} e^{\frac x{\lambda \sqrt{2}}} (\partial_x w) w |w|\\
&\leq 6\lambda \|v_1\|_{L^\infty}^{\frac 12} \int_0^{+\infty} e^{\frac x{2 \lambda \sqrt{2}}} |\partial_x w| |w|^{\frac 32}
\leq 18 \lambda^2 \|v_1\|_{L^\infty} \int_0^{+\infty} |\partial_x w|^2 +\frac 14  \int_0^{\infty} e^{\frac x{\lambda \sqrt{2}}} |w|^3.
\end{align*}
Thus,
\[
\int_0^{+\infty} e^{\frac x{\lambda \sqrt{2}}} |w|^3
\lesssim  \|v_1\|_{L^\infty} \|\partial_x w\|_{L^2}^2,
\]
and \eqref{SFcinq} is proved.

Finally, we have by integration by parts
\[
\int  \left(\psi \partial_x v_1 + \frac 12 \psi' v_1\right)   v_1^3 = \frac 14 \int \psi' v_1^4 \geq 0.
\]
This term happens to have the right sign for   estimate \eqref{10un} (this is related to the fact that equation \eqref{wave ac} is defocusing), but we can also bound this term in absolute value since by \eqref{SFcinq},
\begin{equation}\label{SFsix}
\int \psi' v_1^4\lesssim \varepsilon \int \psi' |v_1|^3\lesssim \varepsilon^2 \|\partial_x w\|_{L^2}^2.
\end{equation}

In conclusion, \eqref{lll} is a consequence of \eqref{rwv}, \eqref{SFun}, \eqref{SFdeux}, \eqref{SFtrois}, \eqref{SFquatre}, \eqref{SFcinq} and \eqref{SFsix}. 
\medskip

{\bf Step 5.} Proof of \eqref{10trois}. We use \eqref{Itwo} with $\psi'(x) = \sech\left(\frac x{2 \sqrt{2}}\right) $.
Note that
\begin{align*}
\int \sech\left(\frac x{2 \sqrt{2}}\right)\left(|\partial_x v_1|^2+ v_1^2\right)
+ \int \left| \sech''\left(\frac x{2 \sqrt{2}}\right)\right|v_1^2 \lesssim \|v_1\|^2_{H^1_{\omega}}, 
\end{align*}
\[
\left|\alpha \int  \sech\left(\frac x{2 \sqrt{2}}\right) v_1 f\right|\lesssim |z|^2 \|v_1\|_{H^1_{\omega}},
\]
and 
\[
\left|\int  \sech\left(\frac x{2 \sqrt{2}}\right) \left(|F_1 v_2| + |v_1 F_2|\right)  \right|
\lesssim   |z|^3\|v_2\|_{L^2_{\omega}}+ \|v_1\|_{H^1_{\omega}}^2.
\]
Using these estimates, we obtain from \eqref{Itwo}
\[
\frac d{dt} \int \sech\left(\frac x{2 \sqrt{2}}\right) v_1 v_2 \geq    \|v_2\|_{L^2_{\omega}}^2
- C \|v_1\|_{H^1_{\omega}}^2  - C |z|^3 \|v_2\|_{L^2_{\omega}}  
+ C |z|^4,
\]
and \eqref{10trois} follows.
\end{proof}

\subsection{Conclusion. Proof of (\ref{Conclusion_0})}\label{Final} 
Let
\begin{equation}\label{defH}
\mathcal H:= \int \left((\partial_x v_1)^2 + 2v_1^2 + v_2^2\right)\sech\left(\frac x{2 \sqrt{2}}\right) .
\end{equation}
Then, using \eqref{syd}, we have
\begin{equation}\label{Kone}\begin{aligned}
\dot{\mathcal  H}  &= 2 \int  \sech\left(\frac x{2 \sqrt{2}}\right) \left((\partial_x \dot v_1) (\partial_x v_1) + 2\dot v_1 v_1  + \dot v_2 v_2\right)\\
& = 2 \int \sech\left(\frac x{2 \sqrt{2}}\right) \left( (\partial_x v_2)(\partial_x v_1) + 2 v_2 v_1  -  (\mathcal L v_1)v_2  -\alpha   fv_2 \right)\\
&+ 2 \int  \sech\left(\frac x{2 \sqrt{2}}\right)\left((\partial_x F_1)(\partial_x v_1) + 2 F_1 v_1 + F_2 v_2\right)\\
& = -\frac 1{\sqrt{2}} \int \sech'\left(\frac x{2 \sqrt{2}}\right)   v_2(\partial_x v_1)
+ 2 \int \sech\left(\frac x{2 \sqrt{2}}\right) \left(3(1-H^2) v_1 v_2 -\alpha   fv_2 \right) \\
&+ 2 \int  \sech\left(\frac x{2 \sqrt{2}}\right)\left((\partial_x F_1)(\partial_x v_1) + 2 F_1 v_1 + F_2 v_2\right)
\end{aligned}\end{equation}
Note that 
\begin{equation}\label{Ktwo}
\left| \int \sech'\left(\frac x{2 \sqrt{2}}\right)   v_2(\partial_x v_1)\right| 
\lesssim \int \left( (\partial_x v_1)^2+{v_2^2} \right) \sech\left(\frac x{2 \sqrt{2}}\right). 
\end{equation}
 
From \eqref{11un}, there exists a sequence $t_n\to +\infty$ such that $\mathcal H(t_n)+z(t_n)\to 0$.
From \eqref{Kone}, \eqref{Ktwo} and the estimates on $F_1$ and $F_2$, we have
\[
|\dot {\mathcal H}|  \lesssim  |z(t)|^4  + \|v(t)\|_{H^1_{\omega}\times L^2_{\omega}}^2 .
\]
Let $t\in \RR$, integrating on $[t,t_n]$ and passing to the limit as $n\to +\infty$, we obtain
\[
\mathcal H(t) \lesssim \int_{t}^{+\infty} \left(|z(t)|^4  + \|v(t)\|_{H^1_{\omega}\times L^2_{\omega}}^2 \right) dt.
\]
From \eqref{11un}, it follows that $\lim_{t\to +\infty} \mathcal H(t)=0.$ The same holds for $t\to -\infty$.
Thus, $\lim_{t\to \pm \infty} \|v\|_{H^1_{\omega}\times L^2_{\omega}}=0.$ Moreover, from \eqref{dz2} and \eqref{Fab}, we have
\begin{equation}\label{dz4}
\left|\frac d{dt}  |z|^4  \right|=
2|\alpha  F_\alpha+\beta F_\beta | \lesssim |z|^3 \left( |z|^2 + \|v_1\|^2_{L^2_{\omega}} \right)\lesssim |z|^4+\|v_1\|^2_{L^2_{\omega}}.
\end{equation}
Using \eqref{11un} and   similar arguments as before, we obtain $\lim_{t\to \pm\infty} |z(t)|=0$. Therefore, (\ref{Conclusion_0}) follows from \eqref{defv}.


\providecommand{\bysame}{\leavevmode\hbox to3em{\hrulefill}\thinspace}
\providecommand{\MR}{\relax\ifhmode\unskip\space\fi MR }
\providecommand{\MRhref}[2]{%
  \href{http://www.ams.org/mathscinet-getitem?mr=#1}{#2}
}
\providecommand{\href}[2]{#2}

\end{document}